%
%
%

\documentclass{amsart}





\usepackage{amssymb,latexsym}
\usepackage{amsmath,amscd}
\usepackage[all]{xy}
\usepackage{fancyhdr}
\usepackage{indentfirst}
\usepackage{graphicx}
\usepackage{newlfont}
\usepackage{amsfonts}
\usepackage{amsthm}
\usepackage{mathrsfs}
\usepackage{psfrag}
\usepackage[all]{xy}

\newtheorem{theorem}{Theorem}[section]

\theoremstyle{definition}

\theoremstyle{prop}
\newtheorem{prop}[theorem]{Proposition}

\theoremstyle{remark}
\newtheorem{remark}[theorem]{Remark}

\numberwithin{equation}{section}


\newcommand{\eps}       {\varepsilon}
\newcommand{\e}         {\varepsilon}
\renewcommand{\a}       {\alpha}

\renewcommand{\d}       {\partial}

\newcommand{\R}         {\mathbb{R}}

\newcommand{\C}         {\mathbb{C}}

\renewcommand{\P}       {\mathbb{P}}

\newcommand{\g}         {\mathfrak{g}}

\renewcommand{\epsilon} {\varepsilon}
\newcommand{\vol}{\textit{vol}}
\newcommand{\ann}{\textrm{Ann}}

\begin{document}

\title[Duistermaat--Heckman formula and cohomology of polygon spaces]{The
Duistermaat--Heckman formula and the cohomology of moduli spaces of polygons}


\author{Alessia Mandini}
\address{Center for Mathematical Analysis, Geometry and Dynamical Systems\\
Departamento de Matematica, Instituto Superior Tecnico\\
1049-001 Lisboa, Portugal\\
 Fax: (351) 21 8417035\\}
\email{amandini@math.ist.utl.pt}
\begin{thanks}
{Partially supported by the Funda\c{c}\~{a}o para a Ci\^{e}ncia e a Tecnologia
(FCT /
Portugal), grant SFRH / BPD / 44041 / 2008 and by Centro de An\'{a}lise
Matem\'{a}tica, Geometria e Sistemas Din\^{a}micos,
Departamento de Matem\'atica, IST, Lisbon (Portugal).}
\end{thanks}

\subjclass[2000]{53D30}

\date{}

\dedicatory{}

\begin{abstract}

We give a presentation of the cohomology ring of spatial polygon spaces $M(r)$
with fixed side lengths $r \in \R^n_+$. 
These spaces can be described as the symplectic reduction of the Grassmaniann of
2-planes in $\C^n$ by the $U(1)^n$-action by multiplication, 
where $U(1)^n$ is the torus of diagonal matrices in the unitary group $U(n)$. We
prove that the first Chern classes of the $n$ line bundles 
associated with the fibration $ \textrm{($r$-level set)} \rightarrow M(r)$
generate the cohomology ring $H^* (M(r), \C).$ By applying the 
Duistermaat--Heckman Theorem, we then deduce the relations on these generators
from the piece-wise polynomial function that describes the 
volume of $M(r).$ 
We also give an explicit description of the birational map between $M(r) $ and
$M(r')$ when the lengths vectors $r$ and $r'$ are in different 
chambers of the moment polytope. This wall-crossing analysis is the key step to
prove that the Chern classes above are generators 
of $H^*(M(r))$ (this is well-known when $M(r)$ is toric, and by wall-crossing we
prove that it holds also when $M(r)$ is not toric).
\end{abstract}

\maketitle
\section{Introduction}

Spatial polygon spaces are a widely studied family of moduli spaces obtained by
symplectic reduction, see for example 
\cite{ag, Goldin, hk, hk1, kamiyama, km, khoi, klyachko, konno, io, ta01, ta02}.
A first way to introduce $M(r)$ is as 
the space of closed piece-wise linear paths in $\R^3$ such that the $j$-th step
has norm $r_j$, modulo rotations and translations. 
The vector $r=(r_1, \ldots, r_n) \in \R^n_+$ is called the lengths vector. 

Kapovich and Millson \cite{km} showed that one can describe $M(r)$ by means of a
symplectic quotient as follows. 
Let $\mathcal{S}_r = \prod_{j=1}^n S_{r_j}^2$ be the product of $n$ spheres in
$\R^3$ of radii $r_1, \ldots, r_n$ and centers all the origin. 
The diagonal $SO(3)$-action on $\mathcal{S}_r$ is Hamiltonian with moment map 
$\mu: \mathcal{S}_r \rightarrow \frak{so}(3)^* \simeq \R^3,$ $\mu(e_1, \ldots,
e_n)= e_1 + \cdots +e_n.$  
Note that an element $ ( e_1, \ldots, e_n ) \in \mathcal{S}_r$ is in
$\mu^{-1}(0)$ if and only if 
the path in $\R^3$ with edges 
$ e_1, \ldots, e_n$ closes to a polygon.
The moduli space of spatial polygons $M(r)$ arises then as the symplectic
quotient $
\mu^{-1}(0)/SO(3)=: \mathcal{S}_r/\!\!/_{\!0}SO(3).$ 

This fits into a broader picture: let $U(1)^n$ be the maximal torus of diagonal
matrices in the unitary group $U(n)$ and consider 
the action by conjugation of $U(1)^n \times U(2) \subset U(n) \times U(2)$ on
$\C^{n \times 2}$ (an element in the complex space 
is naturally thought as an $n\times 2 $ matrix). Note that the diagonal circle
$U(1) \subset U(1)^n \times U(2)$ fixes everything, 
and therefore only the action of $K := U(1)^n \times U(2) / U(1)$ is effective. 
One can then realize the polygon space $M(r)$ as the symplectic reduction 
$$\C^{n \times 2} \big/ \!\!\big/_{\!(r,0)} K$$
cf \cite{hk1}. It is enlightening to perform the symplectic reduction in stages.
Taking first the quotient by $U(1)^n$ one obtains 
the product of spheres $\mathcal{S}_r$ (here the reduction is performed by means
of the Hopf map as explained in Section \ref{inizio} 
and in \cite{hk1}). The residual $U(2)/U_1 \simeq SO(3)$ action is the one
described above, and one recovers the description of the 
polygon space $M(r)$ as the symplectic quotient $\mathcal{S}_r/\!\!/_{\!0}
SO(3).$ 

Performing the reduction in stages in the opposite order, one obtains the
Gelfand--MacPherson correspondence. In fact, one first obtains the
Grassmanian $Gr(2,n)$ of complex planes in $\C^n$ as 
the reduction $ \C^{n \times 2} /\!\!/_{\!0} U(2)$. Then the quotient by the
residual $U(1)^n / U(1)$ action on $Gr(2,n)$ is isomorphic to the moduli space
of $n$ points in $\C\P^1$, cf. \cite{gm}, and hence, by Klyachko \cite{klyachko}
and Kapovich and Millson \cite{km}, is also isomorphic to the 
polygon space $M(r)$. This is summarized in the following diagram:
$$ \xymatrix{
 {}& \C^{n \times 2} \ar[dl]_{U(2)} \ar[dr]^{U(1)^n}& {}\\
 Gr(2,n) \ar[dr]_{U(1)^n / U(1)}  & {} &  \prod_{j=1}^n S_{r_j}^2
\ar[dl]^{U(2)/U_1 \simeq SO(3)} \\
{}& M(r)   & {}
} $$

These two descriptions of the moduli space of polygons intertwine throughout the
paper, and give rise to the description we present of the 
cohomology ring of $M(r).$
On the subject there is a broad literature. Hausmann and Knutson \cite{hk}
computed the integer cohomology rings of the moduli spaces $M(r)$ 
by embedding these spaces (which in general are not toric) in toric varieties
and computing the kernel of the induced restriction map on 
cohomologies. 
The cohomology ring of the polygon space was also computed by Goldin.
In fact in \cite{Goldin} she finds explicit formulae for the rational cohomology
ring of the symplectic reduction of coadjoint orbits of $SU(n)$ by the action of
a maximal torus. Considering degenerate coadjoint orbits she determines the 
cohomology ring of the reduction of the Grassmannian of $k$-planes in $\C^n$.
By the Gelfand--MacPherson correspondence, see \cite{gm}, this is the moduli
space of $n$ points in $\C\P^{k-1}$ which, for $k=2$, is isomorphic to the 
moduli space of $n$-sided polygons in $\R^3$.
Previously Brion \cite{B} and Kirwan \cite{K} have computed the
rational cohomology ring of the special case of equilateral 
polygon spaces $M(1, \ldots, 1)$ with an odd number of edges. 

Many other contributed to the study of these
spaces, see for example \cite{ag, konno, ta01} where the 
intersection numbers are explicitly computed (by means of a recursion formula in
\cite{ag}, using ``quantization commutes with reduction'' 
in \cite{ta01}, via an algebro-geometric approach in \cite{konno}). More
contextualized reference will be given throughout the paper.

Our approach to the cohomology ring of $M(r)$ is as follows. First we give an
explicit description of the birational map between two 
polygon spaces $M(r)$ and $M(r')$ when $r$ and $r'$ lie in different chambers of
the moment polytope $\mu_{U(1)^n}^{-1}(Gr(2,n))$. 
Using this description we prove that the first Chern classes $c_1,\ldots, c_n $
of the $n$ line bundles associated to the 
fibration $\mu^{-1}_{U(1)^n}(r) \rightarrow M(r)$ generate the cohomology ring
$H^*(M(r), \C)$ (whenever $M(r)$ is a smooth manifold). 
This provides the opportunity to determine the relations on the generators $c_1,
\ldots, c_n$ by applying the Duistermaat--Heckman Theorem, 
once the piece-wise polynomial function $\textit{vol} \, M(r)$ that associates
to each generic $r$ the symplectic volume of $M(r)$ is 
known, as it is in our case. In particular this also proves that
polygon spaces only have even dimensional cohomology.
The fact that $H^*(M(r), \C)$ is generated by the classes $c_i$'s was
established in \cite[Corollary 7.4]{hk}, see also Remark \ref{hausknut}.

Let us describe in more detail the results in this paper. Section \ref{inizio}
is a brief overview on polygon spaces, where we give 
details for the symplectic reductions outlined above and for the moment polytope
$\Xi := \mu_{U(1)^n}^{-1}(Gr(2,n))$.
The polygon space $M(r)$ is a smooth K\"ahler manifold if and only if for 
any index set $I\subset \{1, \ldots, n\}$ the scalar quantity
$$\epsilon_I(r): = \sum_I r_i - \sum_{I^c} r_i$$
never vanishes. When this is the case, the lengths vector $r$ is called
\emph{generic}.

In Section \ref{vo} we prove that, for $r$ generic, the piece-wise polynomial
function for the symplectic volume of $M(r)$ is given by
\begin{equation}\label{v} 
\vol \, M(r)=- \, \frac{(2 \pi)^{n-3}}{2(n-3)!}\,  \sum_{I \, \textrm{long}}
(-1)^{n-|I|} \, \epsilon_I(r)^{n-3}
\end{equation}
where an index set $I$ is said to be long (or $r$-long) if and only if
$\epsilon_I(r) >0.$  
The symplectic volume of $M(r)$ was first computed by
Takakura  \cite{ta01} by means of a formula for the generating function of the
intersection pairings of $M(r)$. Formula \eqref{v} was later obtained
independently by Vu The Khoi in \cite{khoi}. The equilateral polygon space 
$M(1,\ldots,1)$ has some independent interest, and has been studied under
several points of view, see for example \cite{B, kirwan}. 
It is easy to see that the equilateral polygon space is smooth only for odd
number of edges $n$. In this case, the symplectic volume of $M(1,\ldots,1)$ has
been computed by Kamiyama and Tezuka \cite{kamiyama}, Takakura \cite{ta02} and
by Martin \cite{Martin}. 
In Section \ref{vo} we prove that Martin's techniques can be adapted to compute
the volume of $M(r)$ (Theorem \ref{volume}) for generic $r$'s.
We believe that this has some independent interest. The proof sets in the
context of equivariant cohomology, where the surjectivity of the Kirwan map $k:
H^*_{SO(3)}(\mathcal S_r) \rightarrow H^*(M(r)) $ suggests that the calculation
of the symplectic volume $\vol \, M(r)$ can be done by looking at $\int_{M(r)}
k(a)$ for a suitably chosen equivariant form $a \in H^*_{SO(3)}(\mathcal S_r)$
(i.e. such that $k$ maps $a$ onto the top power of the symplectic reduced form
$\omega_r$ on $M(r)$).
This is the natural setting for beautiful results, known as Localization
Theorems, that enable one to localize the computation of the integral above at
data associated to the fixed point set. Formula \eqref{v} is then an application
of Martin's Localization Theorem (cf. \cite{Martin} and Theorem
\ref{localization} in here).

In Section \ref{crossing the walls} we deal with describing the diffeotype of
$M(r)$ when $r$ crosses a wall in $\Xi$. It is well-known 
that for $r^0 $ and $r^1$ on either side of a wall, the associated symplectic
reductions $M(r^0)$ and $M(r^1)$ are related by a birational 
map which is the composite of a blow up followed by a blow down. This holds in
greater generality, as proven in \cite{gs89, brion}. For 
polygon spaces we can characterize the submanifolds blown up and blown down as
lower dimensional polygon spaces, cf. Theorem \ref{WCtheo}. 
The moment polytope $\Xi$, first studied in \cite{hk1}, is the hypersimplex 
$\{ r \in \R^n_+ \mid 0 \leq 2r_i \leq 1 \textnormal{ and } \sum_{i=1}^n r_i =
1\}$.
The regions $\Delta^{\! i}$ of regular values in $\Xi$ (called chambers)
are separated by walls,
which are the connected components of the image via 
$\mu_{U(1)^n}$ of the fixed points set $Gr(2,n)^{H}$ for subgroups $H \subset
U(1)^n$. It is not difficult to see that it is enough 
to consider the circles $\{ \textrm{diag} (e^{i \theta} \chi_I(1), \ldots, e^{i
\theta} \chi_I(n)) \} \subset U(1)^n $ where, for any 
index set $I\subset \{ 1, \ldots, n\}$ index set, $\chi_I(i)= 1 $ if $i \in I$
and $\chi_I(j)= 0$ if $j \in I^c$.
It follows (see Section \ref{inizio} and \cite{hk1}) that the walls in $\Xi$
have equation
\begin{equation}\label{eq:epsilon}
\sum_I r_i - \sum_{I^c} r_i=0
\end{equation}
for some $I\subset \{ 1, \ldots, n\}$. The polygon space $M(r)$ is a smooth
$(n-3)$-dimensional symplectic manifold if and only if equation 
\eqref{eq:epsilon} is never satisfied for any index set $I$. Consider 
$$ \epsilon_{I_p}(r)= \sum_{I_p} r_i - \sum_{I_q} r_i$$
where the index set $I_p = \{ i_1, \ldots i_p\}$ has cardinality $p$ and $ I_q =
\{ j_1, \ldots j_q\}$ is its complement (hence   $q:=n-p$).
Let $W_{I_p}$ denote the data of the wall of equation $\eps_{I_p}(r) = 0$
together with the wall-crossing direction from the chamber 
where $\eps_{I_p}(r) >0$  to the one where $ \eps_{I_q}(r)>0.$ Throughout this
paper we will only consider single wall-crossings, i.e. we 
assume that the wall-crossing point $r^c$ is not on a intersection of walls. 
This is not restrictive, since any non-single wall-crossing can 
be decomposed in a finite number of single wall-crossings. Let $r^c \in W_{I_p}$
be the wall-crossing point, i.e. $ \epsilon_{I_p}(r^c)=0$. 
It follows that $\mu^{-1}(0)$ contains the $SO(3)$-orbit of the polygon $P^c=
(e_1^c, \ldots, e_n^c)$ with 
\begin{displaymath}
e_i^c= \left\{
\begin{array}{c}
(r_i, 0,0) \textrm{ if } i \in I_p;\\
-(r_i, 0,0) \textrm{ if } i \in I_q.\\
\end{array}\right.
\end{displaymath}
The polygon $P^c$ lies completely on a line, the $x$-axis, and therefore it is
fixed by the circle $S^1$ of rotations around it. 
This originates a singularity of conic type in the quotient $M(r^c)$.
In Section \ref{crossing the walls} we analyze this singularity using the
description of $M(r)$ as the symplectic quotient of $Gr(2,n)$ 
by $U(1)^n$. 
In particular, we first perform reduction on $Gr(2,n) $ by a complement $H$ of
the circle $S^1 \subseteq U(1)^n / U(1)$ associated to the wall. 
The residual $S^1$-action on $Gr(2,n) /\!\!/_{ \!\!\!\{r_2, \ldots, r_{n-1} \}}
H$ is still Hamiltonian with moment map $\mu_{S^1}$.
The wall-crossing problem for polygon spaces gets then reduced to studying the
changes in the quotient
$$\big( Gr(2,n) /\!\!/_{ \!\!\{r_2, \ldots, r_{n-1} \}} H \big) /\!\!/_{\!\!r}
S^1$$
when $r$ goes through a critical value of $\mu_{S^1}$.
This provides us with two blow down maps $\beta_- : M(r^0) \rightarrow M(r^c)$
and $\beta_+ : M(r^1) \rightarrow M(r^c)$, where $r^0$ 
and $r^1$ are regular values respectively before and after the wall-crossing as
above. To give an explicit description of the two maps 
$\beta_-$ and $\beta_+$ let us introduce some notation. Consider 
the submanifold $M_{I_p}(r) \subset M(r)$ of polygons such that the edges $e_i$,
for $i \in I_p$, are parallel and point in the same 
direction as in Figure \ref{Mp}.

\begin{figure}[htbp]
\begin{center}
\psfrag{1}{\footnotesize{$e_1$}}
\psfrag{2}{\footnotesize{$e_2$}}
\psfrag{3}{\footnotesize{$e_3$}}
\psfrag{4}{\footnotesize{$e_4$}}
\psfrag{5}{\footnotesize{$e_5$}}
\psfrag{6}{\footnotesize{$e_6$}}
\psfrag{7}{\footnotesize{$e_7$}}
\psfrag{8}{\footnotesize{$e_8$}}
\psfrag{9}{\footnotesize{$e_9$}}
\includegraphics[width=3cm]{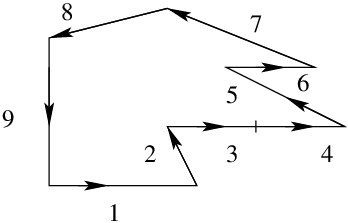}
\end{center}
\caption{\footnotesize{A polygon in $M_{I_4}(r)$ with $I_4= \{ 1,3,4,6\} \subset
\{ 1, \ldots, 9\}$ }}
\label{Mp}
\end{figure}
Note that $M_{I_p}(r)$ is naturally isomorphic to the moduli space $M(r_{I_p})$
of $(p+1)$-gons with lengths vector $r_{I_p}:=(\sum_{i \in I_p} r_i, r_{j_1},
\ldots, r_{j_q})$. It follows that $M_{I_p}(r^0)$ is empty. In fact the
condition $\epsilon_{I_p} (r^0) >0$ implies that $\{1\}$ is an 
$r_{I_p}$-long edge, and therefore the closing condition $\sum e_i =0$ is never
satisfied for
any $\vec{e} \in \mathcal S_{r_{I_p}}$. On the other hand, $M_{I_q}(r^0) \simeq
M(r_{I_q})$ is not empty and, as proven in Section \ref{crossing the walls}, is
diffeomorphic to the projective space $\C\P^{p-2}$.
Similarly, the submanifold $M_{I_q}(r^1)$ is empty while  $M_{I_p}(r^1)$ is the
projective space $\C\P^{q-2}$.
In Section \ref{crossing the walls} we prove that $\beta_-$ maps $ M(r^0)
\setminus M_{I_q}(r^0)$ diffeomorphically onto $ M(r^c) \setminus [P^c]$, and $
M_{I_q}(r^0)$ gets blown down via $\beta_-$ to $[P^c]$.
Similarly, $\beta_+$ blows down $M_{I_p}(r^1)$ to $[P^c]$ and maps $ M(r^1)
\setminus M_{I_p}(r^1)$ diffeomorphically onto $ M(r^c) \setminus [P^c]$. 
The spaces $M_{I_p}(r^1)$ and $M_{I_q}(r^0)$ are different resolutions of the
singularity corresponding to the degenerate polygon $[P^c]$ in $M(r^c),$ and
both are dominated by the blow up $\widetilde{M}$ of $M(r^c)$ at the singular
point, with exceptional divisor $E \simeq \C\P^{q-2} \times \C\P^{p-2} \simeq
M_{I_p}(r^1) \times M_{I_q}(r^0)$.

This wall-crossing analysis and the volume formula intertwine in Section
\ref{coho} where we describe the cohomology ring $H^*(M(r), \C).$ In Section
\ref{The cohomology ring of reduced spaces} we recall some results due to
Guillemin and Sternberg \cite{gs95} on the cohomology ring of reduced spaces.
Section \ref{wallcrossing and cohomology} is the heart of our description of
$H^*(M(r)).$ In fact, if $r^0$ is in an external chamber of the moment polytope
$\Xi,$ then $M(r^0)$ is toric and the Chern classes $c_1, \ldots, c_n$ of the
$n$ complex line bundles associated with $\mu_{U(1)^n}^{-1} (r^0) \rightarrow
M(r^0)$ generate the cohomology ring $H^*(M(r^0)).$ We prove by wall-crossing
arguments that this holds for any regular value $r$ in any chamber of the moment
polytope $\Xi.$ In fact any internal chamber can be reached from an external one
by a finite number of single wall-crossings. First we prove, as an application
of our wall-crossing analysis, that crossing a wall $W_{I_p}$ changes the
dimensions of the cohomology groups of degree $k$ for $k$ an even integer in the
interval  $[2 \textrm{min}(p,q) -2 , 2 \textrm{max}(p,q) -4]$. Precisely, if
$M(r^0)$ and $M(r^1)$ are polygon spaces before and after crossing the wall 
$W_{I_p},$ then
$$\textnormal{dim} H^k(M(r^1))= \textnormal{dim}H^k(M(r^0)) +1, \quad  2p-2 \leq
k \leq 2q-4 \,(\textnormal{case} \, q \geq p )$$
for $k$ even, and $H^k(M(r^1)) = H^k(M(r^0))$ for any other value of $k.$ 
In particular
$$H^k(M(r^1)) = H^k(M(r^0))= 0 \quad \forall k \textnormal{ odd }.$$
This result may also be obtained from the
Poincar\'e polynomial formulae of Klyachko \cite{klyachko} or Hausmann-Knutson
\cite{hk}.
This increasing in the dimension of the ``middle'' cohomology groups can be
explicitly described in terms of the submanifolds $M_{I_p}(r^1)$ and 
$M_{I_q}(r^0).$ Assume for simplicity that $q \geq p,$ and let $PD
([M_{I_p}(r^{0})])$ be the Poincar\'e dual of $M_{I_p}(r^0).$ Then the
cohomology class $PD([M_{I_p}(r^0)])$ living in $H^{2p-2}(M(r^0))$ is zero,
since $M_{I_p}(r^0)$ is empty.
On the other side $M_{I_p}(r^1)$ is not empty and the class of its Poincar\'e
dual $PD([M_{I_p}(r^1)])$ determines the increase in the dimension of
$H^{2p-2}(M(r^1)).$ 

The increase in higher dimensional cohomology groups is given by the cup
product $PD([M_{I_p}(r^{1})]) \smile c_1^{\alpha}(\mathcal N^{1})$ of
$PD([M_{I_p}(r^1)])$ with the first Chern class $c_1(\mathcal N^{1})$, where
$\mathcal N^1$ is the normal bundle to $M_{I_p}(r^1)$, and with its powers
$c_1^{\alpha}(\mathcal N^{1})$ for $\alpha=0, \ldots, q-p,$ as prescribed by the
Decomposition Theorem (see \cite{bbd} and also \cite{luca}, and Theorem
\ref{decomposition} in here):
$$H^*(M(r^1)) = H^*(M(r^0))\oplus \bigoplus_{\a=0}^{q-p} \C \Big(
PD([M_{I_p}(r^1)]) \smile c_1^{\a} (\mathcal N^1) \Big).$$
We prove that $PD([M_{I_p}(r^1)])$ and $c_1(\mathcal N^1)$ are linear
combinations of the
first Chern classes $c_1, \ldots, c_n,$ (cf. Proposition \ref{MIP}). It follows
that $H^*(M(r^1))$ is generated by $c_1, \ldots, c_n$ if $H^*(M(r^0))$ is as
well. This is the case for $r^0$ in an external chamber and therefore, by
crossing a finite number of walls, for $r^0$ in any chamber of the moment
polytope $\Xi.$ 

Applying the Duistermaat--Heckman Theorem one can then describe the cohomology
ring of $M(r)$ as follows 
(Theorem \ref{fine}): $$ H^*(M(r), \C) \simeq \C[x_1, \ldots, x_n]/
\textrm{Ann}(\vol \, M(r))$$ where a polynomial 
$Q(x_1, \ldots, x_n) \in \textrm{Ann}(\vol \, M(r)) $  if and only if $$Q \Big(
\frac{\d}{\d r_{1}}, \ldots, \frac{\d}{\d r_{n}} \Big) \vol \, M(r)=0.$$

{\bf Acknowledgements.} I am grateful to Luca Migliorini for the insightful
discussions and support. I also wish to thank ICTP in Trieste and CRM ``Ennio de
Giorgi'' in Pisa for providing excellent working conditions that made these
discussions possible. I also wish to thank Leonor Godinho for helpful
suggestions, and the University of Utrecht for its hospitality. 
Finally I would like to thank the referees for several improving comments.

\section{The Moduli Space of Polygons $M(r)$ }\label{inizio}

Let $S^2_t$ be the sphere in $\R^3$ of radius $t$ and center the origin.  For
$r= (r_1, \ldots, r_n) \in \R^n_+,$ the product $\mathcal{S}_r = \prod_{j=1}^n
S_{r_j}^2$ of $n$ 2-spheres is a smooth manifold. Let $p_j\colon \mathcal{S}_r
\rightarrow S_{r_j}^2 $ be the projection on the $j$-th factor and let $\omega_j
$ be the volume form on the sphere $S_{r_j}^2.$ Because the $\omega_j$'s are
closed and non-degenerate,  the $2$-form $\omega= \sum_{j=1}^n \frac{1}{r_j}
p_j^* \omega_j$ is closed and non-degenerate as well and defines a symplectic
structure on  $\mathcal{S}_r.$ 
Note that the symplectic form $\omega$ can be written equivalently as 
$\sum_{j=1}^n r_j \pi_j^* \omega_{S^2}$ where $\pi_j$ is the composition 
of $p_j$ with the rescaling map $S_{r_j}^2 \to S^2$, for details see
\cite[Section 1]{km}.

The group $SO(3)$ acts diagonally on $\mathcal{S}_r.$ Equivalently,
identifying the sphere $S_{r_j}^2$ with a $SO(3)$-coadjoint orbit,  the
$SO(3)$-action on each sphere is the coadjoint one. The choice of an invariant
inner product on the Lie algebra $\mathfrak{so}(3)$ of $SO(3)$ induces an
identification $\mathfrak{so}(3)^* \simeq \R^3$ between the dual of
$\mathfrak{so}(3)$ and $\R^3.$ On each sphere $S^2_{r_j},$ the moment map
associated to the coadjoint action is the inclusion of $S^2_{r_j}$ in $\R^3.$ By
linearity, the diagonal action of $SO(3)$ on $\mathcal{S}_r$ has moment map 
\begin{displaymath}
\begin{array}{rcl}
\mu : \mathcal{S}_r& \rightarrow &\R^3\\
\vec{e}= (e_1, \ldots, e_n) & \mapsto & e_1 + \cdots + e_n.\\
\end{array}
\end{displaymath}
The level set $\mu^{-1}(0):= \widetilde{M}(r)= \{ \vec{e}=(e_1, \ldots, e_n) \in
\mathcal{S}_r \mid \sum_{i=1}^n e_i =0 \}$ is a submanifold of $\mathcal{S}_r$
because $0$ is a regular value for $\mu.$

A polygon in $\R^3$ is a closed piece-wise linear path in $\R^3$. Consider the
piece-wise linear path such that the $j$-th step is given by the vector $e_j$.
Such a path closes if and only if $\sum_{i=1}^n
e_i =0$. Therefore $\widetilde{M}(r)$ is the space of $n$-gons of fixed sides
length  $r_1, \ldots, r_n$. Its quotient 
$$M(r):= \widetilde{M}(r)/SO(3) = \mathcal{S}_r
 / \!\! / SO(3)$$ is the space of $n$-gons of fixed sides length  $r_1,
\ldots, r_n$ modulo rigid motions, and is usually called polygon space.
A polygon is called degenerate if it lies completely on a line.

The moduli space $M(r)$ is a smooth manifold if and only if the lengths vector 
$r$ is generic, i.e. for each $I \subset \{ 1, \ldots, n \},$ the quantity 
$$ \epsilon_I(r):= \sum_{i \in I} r_i - \sum_{i \in I^c} r_i$$ is non-zero.
Equivalently, if and only if in $M(r)$ there are no degenerate polygons. In
fact, if there exists a polygon $P$ on a line (or an index set $I$ such that
$\epsilon_I(r)=0$) then its stabilizer is $S^1$ since the polygon $P$ is fixed
by rotations around the axis it defines. Therefore the $SO(3)$-action on
$\widetilde{M}(r)$ is not free and the quotient has singularities, which have
been studied by Kapovich and Millson in \cite{km}. Precisely, they proved that
$M(r)$ is a complex analytic space with isolated singularities corresponding to
the degenerate $n$-gons in $M(r),$ and these singularities are equivalent to
homogeneous quadratic cones. Along the proof of the wall-crossing Theorem
\ref{WCtheo} in Section \ref{crossing the walls} we will provide an explicit
description of the cone $C_W$ over the singularity.
Note that, for $r$ generic, the polygon space $M(r)$ inherits a symplectic form
by
symplectic reduction, see for example \cite{audin}.

An alternative description of the moduli space $M(r)$ is given by
Hausmann and Knutson in \cite{hk1} which also resemble an earlier work of
Gelfand and
MacPherson \cite{gm}. 
With minor adaptations we provide an overview here.
Let $U(1)^n $ be the maximal torus of diagonal matrices in the unitary group
$U(n)$. The group $U(1)^n \times U(2)$ acts by conjugation on $ \C^{n \times
2}$. The action is Hamiltonian and the polygon space $M(r)$ can then be realized
as the symplectic quotient of $ \C^{n \times 2}$ by $U(1)^n \times U(2)$, cf.
\cite{hk1}. One can perform reduction in stages. Consider first
the $U(2)$-action 
with associated moment map $$\begin{array}{rcl}
\mu_{_{U(2)}} : \C^{n \times 2}& \rightarrow & \mathfrak{u}(2)^*\\
A& \mapsto & - \frac{i}{2}  (A^*A -Id),\\
\end{array}$$ where $A^*$ is the conjugate transpose of $A$ and $Id$ is the
identity matrix.
The Stiefel manifold of orthonormal 2 frames in $\C^n,$ defined as follows
$$ St_{2,n} = \Bigg\{ \left(\begin{array}{cc} 
a_1 & b_1\\
\vdots & \vdots \\
a_n& b_n
\end{array} \right) \in \C^{n \times 2}:  \sum_{i=1}^n |a_i|^2 =1,\, 
\sum_{i=1}^n |b_i|^2 =1, \, \sum_{i=1}^n a_i\bar{b_i}=0 \Bigg\} $$ can be
realized as the zero level set $\mu_{_{U(2)}}^{-1} (0)$.
Let $Gr(2,n)$ be the Grassmannian of 2-planes in $\C^n.$ The map $$ p:
St_{2,n}\rightarrow Gr(2,n)$$ that takes an element $(a,b) \in St_{2,n}$ into
the plane generated by the column vectors $a$ and $b$ is actually the projection
of $St_{2,n}$ on to the orbit space $St_{2,n}/U(2).$ This realizes the
Grassmannian
$Gr(2,n)$ as the symplectic quotient $\C^{n \times 2}/ \! \!/U(2).$ 
The projection $p$ is $U(n)$-equivariant and thus the $U(n)$-action descends to
an action on the quotient  $Gr(2,n)$.  
The action of the maximal torus $U(1)^n$ on $Gr(2,n)$ is Hamiltonian with
associated moment map $\mu_{_{U(1)^n}}: Gr(2,n) \rightarrow \R^n$ such that, if
$\Pi= \langle a,b \rangle$ is the plane generated by $a,b \in \C^n,$ then $$
\mu_{_{U(1)^n}}( \Pi)= \frac{1}{2}(|a_1|^2 + |b_1|^2, \ldots , |a_n|^2 +
|b_n|^2).$$
The image of the moment map $\mu_{_{U(1)^n}} (Gr(2,n))$ is the hypersimplex
$\Xi$
$$\mu_{_{U(1)^n}} (Gr(2,n))= \Xi = \Big\{ (r_1, \ldots, r_n) \in \R^n | \, 0
\leq 2 r_i \leq 1, \quad \sum_{i=1}^n r_i =1 \Big\} $$
and the set of critical values of $\mu_{_{U(1)^n}}$ consists of those points $
(r_1, \ldots, r_n) \in \Xi$ satisfying one of the following conditions
\begin{itemize}
\item[(a)] one of the $r_i$'s vanishes or  is equal to 1/2;
\item[(b)] $\exists$ $I$ such that $\epsilon_I(r)=0$ with $|I|$ and $|I^c|$ at
least two.
\end{itemize}

\begin{remark}\label{inner walls}
Note that points satisfying (a)  constitute the boundary of $\Xi,$ while points
satisfying condition (b) are the inner walls of $\Xi.$ 
Moreover condition (a) is equivalent to the following 
\begin{itemize}
 \item[(a')] $\exists$ $I$ such that $\epsilon_I(r)=0$ with $|I|=1$ or $|I|=n-1$
\end{itemize}
Therefore a wall in $\Xi$ has equation
\begin{equation}\label{eq:wall}
\epsilon_I(r) = - \epsilon_{I^c}(r)=0
\end{equation}
for some index subset $I \subset \{ 1, \ldots, n\}$. Denote by $W_I$ or $
W_{I^c}$ the wall of equation \eqref{eq:wall}. (In Section \ref{crossing the
walls} the choice of either $W_I$ or $W_{I^c}$ will encode the wall crossing
direction).
The walls separates the regions $\Delta^{\! i}$ of regular values, 
called chambers, for which
$\varepsilon_I (r) \neq 0$ for any $I \subset \{ 1, \ldots, n\}$. 
An index set $I$ is said to be \emph{short} if  $\epsilon_I(r)<0$,
and \emph{long} if its complement is short.
Geometrically, an index set $I$ is short if the polygon space $M(r)$ 
contains configurations $[e_1, \ldots, e_n]$ where the edges $e_i$, for 
$i \in I$, are all positive proportional to each other.
For example, a polygon as in Figure \ref{Mp} exists in $M(r)$ if and only 
if the index set $I_4= \{ 1,3,4,6\}$ is short.

Since for any regular value either $I$ or $I^c$ is short, and this is consistent
within the chamber $\Delta^{\! i}$ containing $r$, it follows that a chamber
$\Delta^{i}$ is uniquely determined by the collection of short sets 
$$\mathcal S( \Delta^{\! i}):= \{ I \subset \{ 1, \ldots, n\} \mid I
\textnormal{ is short for any } r \in \Delta^{\! i}\}.$$
Given $I \in \mathcal S( \Delta^{\! i})$, the wall $W_I$ is in the closure of
$\Delta^{\! i}$ if and only if $ I$ is maximal (with respect to the inclusion)
in $\mathcal S( \Delta^{\! i})$. A chamber $\Delta^{\! i}$ is external if its
closure contains an outer wall. Equivalently, this means that there exists a
cardinality-$1$ set $\{ j\} \in \mathcal S( \Delta^{\! i})$ which is not
contained in any other short set.
\end{remark}

Under a canonical diffeomorphism between $M(r)$ and $M(\lambda r)$ the
symplectic forms are proportional ($\omega_{\lambda r} = \lambda \omega_r$).
Hence the condition that fixes the perimeter $ \sum_{i=1}^n r_i =1$ is not
restrictive
and allows one to work with the compact polytope $\Xi$ rather than with the
positive octant $\R^n_+$, whose chambers of regular values are cones. The
particular choice $ \sum_{i=1}^n r_i =1$ descends from the moment map
$\mu_{_{U(2)}}$ (or equivalently from considering orthonormal frames in
$St_{2,n}$).

\begin{prop}(Hausmann--Knutson \cite{hk1})
For generic $r \in \Xi$, the polygon space $M(r)$ is the symplectic reduction
relative to the
$U(1)^n$-action on the Grassmaniann $Gr(2,n)$ at the level set $r,$ i.e.  $$
M(r) \simeq U(1)^n\backslash \mu_{_{U(1)^n}}^{-1}(r) =  Gr(2,n)/ \!\! /_{r}
U(1)^n .$$ 
\end{prop}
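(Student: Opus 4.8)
The plan is to derive the identification $M(r) \simeq Gr(2,n)/\!\!/_r U(1)^n$ by exploiting the fact that symplectic reduction can be performed in stages, and that $\C^{n\times 2}/\!\!/_{(0,r)}(U(1)^n\times U(2))$ has already been shown (via the other order of reduction, through $\mathcal S_r$) to be the polygon space $M(r)$. Concretely, I would first recall the reduction-in-stages principle: for a product group $G_1\times G_2$ acting on a symplectic manifold $(N,\omega)$ with moment map $(\mu_1,\mu_2)$, if the reductions are regular one has
$$ N/\!\!/_{(c_1,c_2)}(G_1\times G_2) \;\simeq\; \bigl(N/\!\!/_{c_2}G_2\bigr)/\!\!/_{c_1}\overline{G_1}, $$
where $\overline{G_1}$ is the induced (residual) $G_1$-action on the intermediate quotient, and the intermediate moment map is the one induced by $\mu_1$. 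Applying this with $N=\C^{n\times 2}$, $G_1=U(1)^n$, $G_2=U(2)$, $c_1=r$, $c_2=0$: the first stage $\C^{n\times 2}/\!\!/_0 U(2)$ is, as recalled in the excerpt, the Grassmannian $Gr(2,n)$, realized as $St_{2,n}/U(2)$ with $St_{2,n}=\mu_{U(2)}^{-1}(0)$. The residual $U(1)^n$-action on $Gr(2,n)$ is exactly the one whose moment map $\mu_{U(1)^n}$ was written down explicitly in the excerpt, so the second stage is $Gr(2,n)/\!\!/_r U(1)^n$.

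Next I would check the hypotheses needed to apply reduction in stages cleanly. The two subgroups $U(1)^n$ and $U(2)$ of $U(1)^n\times U(2)$ commute (they sit in distinct factors), so $U(2)$ preserves $\mu_{U(1)^n}^{-1}(\,\cdot\,)$ and $U(1)^n$ preserves $\mu_{U(2)}^{-1}(0)=St_{2,n}$; this is what makes the residual actions well-defined. I would note that $0$ is a regular value for $\mu_{U(2)}$ (so $St_{2,n}$ is a manifold and $Gr(2,n)$ a smooth symplectic quotient — indeed it is just the usual realization of the Grassmannian) and that $U(2)$ acts freely on $St_{2,n}$ (orthonormal frames have trivial stabilizer). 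For the second stage, the hypothesis ``$r$ is a regular value of $\mu_{U(1)^n}$'' is exactly the standing assumption of the proposition, and under that hypothesis $U(1)^n$ acts on $\mu_{U(1)^n}^{-1}(r)$ with at most finite stabilizers so the quotient is a symplectic orbifold/manifold; combined with the earlier fact (from Section \ref{inizio}) that regularity of $r$ is equivalent to $M(r)$ being a smooth symplectic manifold, this matches up.

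The last step is to match the two descriptions. Since $\C^{n\times 2}/\!\!/_{(0,r)}(U(1)^n\times U(2)) \simeq M(r)$ was recalled from \cite{hk1} (and is re-derived in Section \ref{inizio} by reducing in the opposite order, first by $U(1)^n$ via the Hopf map to get $\mathcal S_r$, then by $SO(3)\simeq U(2)/U(1)$), and since reduction in stages gives that the same double quotient equals $Gr(2,n)/\!\!/_r U(1)^n$, transitivity of these symplectomorphisms yields $M(r)\simeq Gr(2,n)/\!\!/_r U(1)^n$. I would also remark that the diagonal circle $U(1)\subset U(1)^n\times U(2)$ acts trivially, so reducing by $U(1)^n\times U(2)$ and reducing by $K=U(1)^n\times U(2)/U(1)$ give the same space, and correspondingly one may replace the residual $U(1)^n$-action on $Gr(2,n)$ by $U(1)^n/U(1)$; this is a cosmetic bookkeeping point but worth stating so the diagram in the introduction is literally correct.

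The main obstacle I expect is purely a matter of care rather than depth: making sure the reduction-in-stages theorem is applied at a level set $(0,r)$ where one stage ($\mu_{U(2)}^{-1}(0)$) is a genuine regular free reduction while the other ($\mu_{U(1)^n}^{-1}(r)$) is only an orbifold-type reduction with finite stabilizers, and checking that the residual $U(1)^n$-moment map on $Gr(2,n)$ induced abstractly by the stages construction really is the explicit $\mu_{U(1)^n}(\langle a,b\rangle)=\tfrac12(|a_1|^2+|b_1|^2,\dots,|a_n|^2+|b_n|^2)$ written in the excerpt — this is a short computation using that on $St_{2,n}$ the columns are unit vectors, but it must be done to know the level ``$r$'' on the Grassmannian side is the same ``$r$'' as the side-length vector. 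Once that bookkeeping is in place, the proposition is immediate.
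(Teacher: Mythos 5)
Your proposal is correct and follows essentially the same route the paper (and Hausmann--Knutson) take: the proposition is obtained by performing the reduction of $\C^{n\times 2}$ by $U(1)^n\times U(2)$ in stages in the two possible orders, identifying one order with $\mathcal S_r/\!\!/_0\,SO(3)=M(r)$ via the Hopf map and the other with $Gr(2,n)/\!\!/_r\,U(1)^n$, exactly as in the discussion surrounding diagram \eqref{beauty}. Your extra care about regularity, freeness of the $U(2)$-action on $St_{2,n}$, and matching the induced residual moment map with the explicit formula is precisely the bookkeeping the paper leaves implicit.
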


Note that one recovers the previous description of $M(r)$ as the symplectic
quotient  $\mathcal{S}_r  / \!\! / SO(3)$ performing the reduction of $ \C^{n
\times 2}$ by $U(1)^n \times U(2)$ in the opposite order.
In fact, let $\tilde{\mu}_{U(1)^n}: \C^{n \times 2} \rightarrow \R^n $ be the
moment map for the $U(1)^n$ action on $ \C^{n \times 2}$. Clearly 
$$ \tilde{\mu}_{U(1)^n}(a,b) = \frac{1}{2} (|a_1|^2 + |b_1|^2, \ldots , |a_n|^2
+ |b_n|^2)$$
and 
$$ \tilde{\mu}_{U(1)^n}^{-1}(r) \simeq  \prod_{j=1}^n S^3_{\sqrt{2 r_j}}.$$
The torus $U(1)^n $ acts diagonally on $\prod_{j=1}^n S^3_{\sqrt{2 r_j}}$ and
the projection map $$\tilde{\mu}_{U(1)^n}^{-1}(r) \rightarrow \C^{n \times 2} /
\!\! /_{\! r} \, U(1)^n$$ 
is just the Hopf map 
\begin{displaymath}
\begin{array}{rcl}
H^n: \prod_j S^3_{\sqrt{2 r_j}} & \rightarrow&  \prod_j S^2_{r_j}\\
(a,b)& \mapsto & (H(a_1,b_1), \ldots, H(a_n,b_n))
\end{array}
\end{displaymath}
where
$$ H(a_{i}, b_{i})= \Big(\frac{|a_i|^2 - |b_i|^2}{2}, \textnormal{Re} (\bar{a}_i
b_i), \textnormal{Im} (\bar{a}_i b_i) \Big)$$ 
and $ Re(\bar{a}_i b_i)$ and $Im(\bar{a}_i b_i)$ are the real and imaginary part
of $\bar{a}_i b_i \in \C.$
Note that on each sphere the map $H^n$ is obtained from the quaternionic Hopf
map 
$$ \tilde{H}(a_{\ell}, b_{\ell})= i[(|a_{\ell}|^2- |b_{\ell}|^2) +
2\bar{a}_{\ell} b_{\ell} \,j].$$
The residual $U(2) / U(1) \simeq SO(3)$ action is then the one described above.

Since these two descriptions of $M(r)$ obtained by performing reduction in
stages in different order will play a central role along the paper, we find it
convenient to explore here the relation between the two.
Denote by $p^{-1}(\mu_{U(1)^n}^{-1}(r))$ the preimage in $ St_{2,n}$ of the
$r$-level set in $Gr(2,n).$ Then $p^{-1}(\mu_{U(1)^n}^{-1}(r))$ is the set of
$(a,b) \in St_{2,n}$ such that each row has norm $2 r_i,$ i.e. 
$$p^{-1}(\mu_{U(1)^n}^{-1}(r)) = \{ (a,b) \in  St_{2,n}: |a_i|^2 +|b_i|^2 = 2
r_i \quad \forall i=1, \ldots n \}. $$
This naturally defines the inclusion map 
\begin{equation}\label{3sphere}
\imath: p^{-1}(\mu_{U(1)^n}^{-1}(r)) \hookrightarrow
\tilde{\mu}_{U(1)^n}^{-1}(r) \simeq  \prod_j S^3_{\sqrt{2 r_j}}.
\end{equation}
It follows that 
$$ H^n (\imath(p^{-1}(\mu_{U(1)^n}^{-1}(r))) )=  \mu_{SO(3)}^{-1}(0)= \{ (e_1,
\ldots, e_n) \in  \prod_j S^2_{r_j} \mid \sum_{i} e_i=0\}.$$
The fact that the vectors $e_i := H(a_i, b_i)$ sum up to 0 follows from the
conditions for $(a,b) \in St_{2,n}$:
$$H(a_i,b_i)=\Big(\sum_{i=1}^n \frac{|a_i|^2 - |b_i|^2}{2},  \sum_{i=1}^n
Re(\bar{a}_i b_i),  \sum_{i=1}^n Im(\bar{a}_i b_i) \Big)= 0.$$ 
Thus the following diagram embodies the rich geometric
structure of $M(r):$ 

\begin{equation}\label{beauty}
\xymatrix{St_{2,n} \supseteq p^{-1}(\mu_{U(1)^n}^{-1}(r)) \ar[d]_p
\ar@{^{(}->}[rr]& & \imath (p^{-1}(\mu_{U(1)^n}^{-1}(r))) \subseteq \prod_j
S^3_{\sqrt{2 r_j}} \ar[d]_{H^n} \\
Gr(2,n)\supseteq \mu_{U(1)^n}^{-1}(r) \ar[dr]_{U(1)^n} & &  \mu_{SO(3)}^{-1}(0)
\subseteq  \prod_j S^2_{r_j} \ar[dl]_{s}^{\,\,\,\,\,\, U(2)/U_1 \simeq SO(3)}\\
 & M(r) & \\}
\end{equation}

\begin{remark}\label{bending}
In \cite{km} Kapovich and Millson define the so called bending flows on the
polygon space $M(r)$. As proved in \cite{hk1}, these form the residual 
torus action from the Gelfand-Cetlin system on the Grassmannian  $Gr(2,n)$.
The bending flows define a toric action on the polygon space $M(r)$
if and only if it is possible to choose a system of $n-3$ non-intersecting and
nowhere vanishing
diagonals. For $n=4,5,6$, Hausmann and Knutson \cite[Section 6]{hk1}  determine
explicit combinatorial conditions depending on $r \in \R^n_+$
for the bending action on $M(r)$ to be toric, see also \cite{io}.
In particular, for $n=5$, in \cite{hk2},
Hausmann and Knutson prove that $M(1,1,1,1,1)$ is not toric. In 
fact it has Riemann-Roch number $6$ and Euler characteristic 7.
Still, for small $\epsilon$, the length vector $(1+\epsilon,1,1,1,1+\epsilon)$ 
is in the same chamber as $(1,1,1,1,1)$ and the bending flows define a toric 
action on the polygon space $M(1+\epsilon,1,1,1,1+\epsilon)$, cf. \cite{hk1,
hk2}.
Consequently in the same chamber we can obtain both toric and non-toric
manifolds, i.e. being toric is not an invariant of the chamber.
\end{remark}

\section{The Symplectic Volume of $M(r)$}\label{vo}
The goal of this of this section is to prove an explicit formula
for the volume of polygon spaces. 
The volume of $M(r)$ had been already computed \cite{ta01, khoi}, still we
believe that the proof we give via Martin's localization Theorem has some
independent interest. Moreover, the volume formula Theorem \ref{volume} has a
central role for our
description of the cohomology ring $H^*(M(r), \C)$ in Theorem \ref{fine}.
\subsection{Martin's Results}

In this section we give some basic definitions and results in equivariant
cohomology. On this topic there is a rich literature, in particular we refer to
the survey papers \cite{AB} and \cite{D}, and also the book \cite{K}.

Let $G$ be a compact Lie group acting on a smooth manifold $M$ in a Hamiltonian
way, with moment map $\mu:M \rightarrow \g^*.$ The equivariant cohomo\-logy of
$M$ is defined to be the ordinary cohomology of $M_G:= EG \times_G M,$ where
$EG$ is the total space of the universal bundle $EG \rightarrow BG,$ $BG $ being
the classifying space of the group $G.$ 

Let $\xi \in (\mathfrak g^*)^G$ be a regular value for the moment map $\mu,$
fixed by the co-adjoint $G$-action. Assume also that $G$ acts freely on
$\mu^{-1}(\xi),$ so that the orbit space $\mu^{-1}(\xi)/G := M/ \!\! /_{_{\xi}}
G $ is a manifold.

In \cite{K}, Kirwan proved that there exists an epimorphism
$$ k: H^*_G(M) \rightarrow H^*(M/\!\!/_{_{\xi}}G),$$ which is known as the
\emph{Kirwan map}.

The surjectivity of the Kirwan map rises the hope that a good deal of
information about the cohomology ring $H^*(M/\!\!/_{_{\xi}}G)$ of a reduced
space can be computed from the equivariant cohomology $H^*_G(M)$ of $M.$ The
extra information encoded by the equivariant cohomology turns out to be related
to the orbit structure of the $G$-action, and in this sense equivariant
cohomology is the natural setting for results, known as localization Theorems,
which enables many computation to be reduced to the fixed point set of the
$G$-action. 

Our proof of the volume formula (Theorem \ref{volume}) for the moduli space of
polygons is based on a localization Theorem due to Martin \cite{Martin}. A
similar result has been proven independently by Guillemin and Kalkman \cite{gk}.
In \cite{Martin} it is calculated, as an example, the symplectic volume of the
moduli space $M(1, \ldots,1)$ of polygons with an odd number of edges all of
length 1. In Section \ref{volu} we prove that, mutata mutandis, Martin's
techniques hold for any generic $r \in \R^n_+.$

Assume now that $M$ is symplectic. Endow $M$ with a Hamiltonian action of a
torus $T$ with associated moment map $\mu\colon X \rightarrow \mathfrak{t}^*$. 
Let $p_0$ and $p_1$ be two regular values of the moment map $\mu.$ A
\emph{transverse path $Z$} is a one-dimensional submanifold $Z \subset
\mathfrak{t}^*$ with boun\-dary $\{ p_0,p_1\}$ such that $Z$ is transverse to
$\mu.$
A \emph{wall} in $\mathfrak{t}^*$ is defined to be a connected component of
$\mu(M^H)$ where $M^H$ is the fixed point set for some oriented subgroup $H
\simeq S^1$ of $T.$

Orient $H$ as follows: first orient $Z$ from $p_0$ to $p_1.$ Each positive
tangent vector field in $T_qZ,$ thought as an element of $\mathfrak{t}^*,$
defines a functional on $\mathfrak{t}$ which restricts to a nonzero functional
on $\mathfrak{h}$:= Lie($H$). The orientation of $H$ is defined to be the
positive one with respect to this functional.

\begin{theorem}\label{localization}(\emph{Localization Theorem} \cite{Martin})
Let $p_0 $ and $p_1$ be regular values of the moment map $\mu$ joined by a
transverse path $Z$ having a single wall crossing at $q$ and let $H \simeq S^1$
be the oriented subgroup associated to the wall-crossing from $p_0$ to $p_1$.
There exists a map $$ \lambda_H : H^*_T (M) \rightarrow H^*_{T/H} (M^H),$$
called localization map, such that, for any $a \in H^*_T (X), $
$$ \int_{M/\!\!/_{_{\!p_0}}T } k_0(a) -\int_{M/\!\!/_{_{\!p_1}}T } k_1(a)  =
\int_{M^H /\!\!/_{_{\!q}}T } k_q(\lambda_H(a_{|_{M^H}}))$$
where the maps $k_i: H^*_T (M) \rightarrow H^*(M/ \!\! /_{_{\!p_i}}T) $ are the
Kirwan maps, $M^H /\!\!/_{_{\!q}}T $  is the symplectic quotient of
$\mu_{|_{M^H}}^{-1}(q) \cap M^H$ by the quotient subgroup $T/H$ and $k_q:
H^*_{T/H} (M^H) \rightarrow H^*(M^H/ \!\! /_{_{\!q}}T)$ is the associated Kirwan
map.
\end{theorem}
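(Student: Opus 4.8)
The plan is to reduce the torus wall-crossing to a circle wall-crossing, prove the circle case by a Stokes/cobordism comparison of the two reduced spaces, and then recognise the boundary contribution as the integral over $M^H/\!\!/_q T$ produced by $\lambda_H$. Throughout I work with rational coefficients, so the locally free quotients (orbifolds) that appear cause no trouble. First I would note that for a \emph{fixed} equivariant class $a \in H^*_T(M)$ the number $\int_{M/\!\!/_p T} k_p(a)$ is constant on each chamber of regular values: the level sets $\mu^{-1}(p)$ over a fixed chamber are $T$-equivariantly homotopy equivalent, so $k_p(a)$ corresponds to a fixed class under the Duistermaat--Heckman identification. Hence I may move $p_0$ and $p_1$ freely inside their chambers without changing either side, and I take them of the form $p_0 = q - \delta v,\ p_1 = q + \delta v$ with $v \in \mathfrak{h}^*$ and $\delta$ small, so that $p_0,p_1,q$ share the same $(T/H)$-component $\eta$. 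Choosing a complementary subtorus $T' \subset T$ with $T = H\cdot T'$ (so $T' \to T/H$ is a finite cover) and $\mathfrak t = \mathfrak h \oplus \mathfrak t'$, I reduce first by $T/H$ at the regular level $\eta$, obtaining a compact symplectic orbifold $N := M/\!\!/_{\eta}(T/H)$ carrying a residual Hamiltonian $H \simeq S^1$-action with moment map $\phi \colon N \to \mathfrak h^* \simeq \R$. Reduction in stages identifies $M/\!\!/_{p_i}T \simeq N/\!\!/_{a_i} H$ with $a_i := p_i|_{\mathfrak h}$, compatibly with the Kirwan maps, and identifies the level-$c$ fixed set $N^{S^1}\cap \phi^{-1}(c)$ with $M^H/\!\!/_q T$. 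This places us in the case $T = H = S^1$.

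For the circle case I set $a_0 < c < a_1$ with no other critical value of $\phi$ in between, write $F := N^{S^1}\cap \phi^{-1}(c)$, and consider $W := \phi^{-1}([a_0,a_1])$, a compact manifold-with-boundary on which $S^1$ acts locally freely away from $F$. Removing an invariant tubular neighbourhood $\nu(F)$ and quotienting by $S^1$ yields an oriented cobordism $\bar W$ with
$$\partial \bar W = N/\!\!/_{a_1} H \;\sqcup\; \big({-}N/\!\!/_{a_0} H\big) \;\sqcup\; \big({-}L\big), \qquad L := \partial\nu(F)/S^1.$$
Representing $a$ by an equivariantly closed Cartan form and substituting the curvature of the principal bundle $\phi^{-1}(t) \to N/\!\!/_t H$ produces a closed form $\tilde a$ on $\bar W$ whose restrictions to the two outer boundary pieces represent $k_{a_i}(a)$. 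Stokes' theorem gives $\int_{\partial \bar W}\tilde a = 0$, i.e., with the orientation of $H$ fixed by the crossing direction,
$$\int_{N/\!\!/_{a_0} H} k_{a_0}(a) - \int_{N/\!\!/_{a_1} H} k_{a_1}(a) = \int_{L} \tilde a.$$

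It remains to identify the link term. The link $L$ is the $S^1$-quotient of the unit normal sphere bundle of $F$, hence a sphere bundle over the reduced wall $F/\!\!/_c(T/H) = M^H/\!\!/_q T$. Integrating $\tilde a$ along the sphere fibres is, by the equivariant Thom isomorphism, the pushforward of $a|_{M^H}$ divided by the $H$-equivariant Euler class $e_H(\nu)$ of the normal bundle $\nu$ of $M^H$, followed by the residue extracting the $(T/H)$-component; this composite is precisely the localization map $\lambda_H \colon H^*_T(M) \to H^*_{T/H}(M^H)$. It is well defined because the $H$-weights on $\nu$ are all nonzero (as $M^H$ is the full $H$-fixed locus), so $e_H(\nu)$ is invertible after localization. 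Matching fibre integration with $\lambda_H$ and the residual $T/H$-reduction with $k_q$ gives $\int_L \tilde a = \int_{M^H/\!\!/_q T} k_q(\lambda_H(a|_{M^H}))$, which is the assertion.

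The principal difficulty is the content of the last paragraph: setting up the equivariant Thom/Gysin pushforward for $\nu$, proving that fibre integration of $\tilde a$ over the normal sphere bundle coincides with the residue defining $\lambda_H$, and — most delicately — tracking the orientation-induced signs so that they agree with the prescribed orientation of $H$ (this is what pins down the order $p_0 - p_1$, rather than $p_1 - p_0$, on the left). A secondary point is verifying that reduction in stages for $T = H\cdot(T/H)$ intertwines the Kirwan maps and that the locally free $S^1$-quotients are harmless over $\Q$; both are standard but must be checked in the present (possibly non-effective, orbifold) setting.
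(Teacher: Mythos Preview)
The paper does not prove this theorem: it is quoted from Martin \cite{Martin} (with the companion Proposition~\ref{lequiv} and Theorem~\ref{int.formula}) and used as a black box in the proof of the volume formula, Theorem~\ref{volume}. There is therefore no proof in the paper to compare your attempt against.

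For what it is worth, your outline is the standard route to results of this type (Martin, and independently Guillemin--Kalkman \cite{gk}): reduce in stages by a complement of $H$ to bring the problem down to a circle action, excise a tubular neighbourhood of the fixed locus to obtain an oriented cobordism between the two reduced spaces and the link of the fixed set, apply Stokes to a Cartan representative of $a$, and identify the link integral as a residue/pushforward along the normal bundle. The issues you single out as delicate---the orientation of $H$ dictating the sign of the difference, and matching fibre integration over the normal sphere bundle with the residue defining $\lambda_H$---are exactly the substantive points; the paper records the outcome of the latter as Proposition~\ref{lequiv} (the Segre-class formula for $\lambda_H$) rather than deriving it.
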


It is possible to describe the localization map $\lambda_{H} $ in terms of
equivariant characteristic classes. 
As pointed out in \cite{AB}, the functorial nature of the construction that to
$M$ associates $M_G$ enables one to define equivariant correspondents of the
concepts of ordinary cohomology in a natural way. In particular, if $V$ is a
vector bundle over $M,$ then any action of $G$ on $V$ lifting the action on $M$
can be used to define a vector bundle $V_G= EG \times_G V$ over $M_G$ that
extends the bundle $V \rightarrow M.$ Thus, for example, the first Chern class
of $V_G,$ $c_1(V_G),$ naturally lies in $H^*(M_G)=:H^*_G(M)$ and is called the 
equivariant first Chern class of $V,$ denoted by $c_1^{G}(V).$ All other
equivariant characteristic classes are defined in a similar way. (See also
\cite{Martin}, Appendix B.)

As before, $H\simeq S^1$ is the subgroup of $T$ associated to the wall-crossing
we are examining. Let $T' \subset T$ be a complement of $H,$ i.e. $T=T'\times
H.$ This defines an isomorphism $H^*_T (M^H) \cong H^*_{T'} (M^H) \otimes H^*_H
(M^H).$ Note that  $H^*_H (M^H)\cong  H^* (BH)$ (it is enough to remember that $
H^*_H (M^H)$ is defined to be the ordinary cohomology ring  $H^* (EH \times_H
M^H)$ and to note that $H$ acts trivially on its fixed point set $M^H$).
Therefore $$ H^*_T (M^H)\cong H^*_{T'} (M^H) \otimes  H^* (BH).$$
It follows that the restriction to $M^H$ of any class $a \in H^*_T (M)$
decomposes as $a_{|_{M^H}} = \sum_{i\geq 0} a_i \otimes u^i$ where $u$ is the
positive generator of $H^*(BH)$ and the $a_i$ are elements in $ H^*_{T'}
(M^H).$ 
\begin{prop} \label{lequiv} \cite{Martin} With the notation above,
$$\lambda_H(a)= k(M^H_i) \sum_{i \geq 0} a_i \smile s^w_{i-\rho +1}$$
where $k(M^H_i)$ is the greatest common divisor of the weights of the $H$-action
on the fibers of the normal bundle $\nu M^H_i \rightarrow M^H_i,$ $s_j^w$
denotes the $j$-th $T'$-equivariant Segre class of $(\nu M^H, H)$ and $\rho$ is
the function (constant on the connected components of $M^H$) such that $2 \rho =
\mathrm{rank}(\nu M^H).$
\end{prop}
The next result relate integration over the symplectic quotients respectively by
a non abelian group $G$ and by a maximal torus in $G$. 

Let $G$ be a connected compact Lie group which acts on the smooth manifold $M$
in a Hamiltonian way, with associated moment map $\mu_G$. Let $T$ be a maximal
subtorus in $G.$ The restriction of the $G$-action defines a Hamiltonian
$T$-action on $M$  (with associated moment map $\mu_T$). There is a natural
restriction map $r_T^G : H^*_G(M) \rightarrow H^*_T(M) $ between the equivariant
(with respect to $G$ and $T$) cohomology rings.  To fix the notation, 
$\C^m_{(w)}$ denotes the complex space $\C^m$ endowed of the $S^1$-action with 
weight $w$ and $\underline{\C}_{(w)}:= M \times \C_{w}$ is the total space of an
equivariant line bundle over $M.$
\begin{theorem}\label{int.formula} \emph{Equivariant integration formula.}
\cite{M2} \newline
For all $a \in H^*_G(M),$ $$\int_{M/\!\!/G} k_G (a) = \frac{1}{| W |}
\int_{M/\!\!/T} k_T \Big(r_T^G(a) \smile \prod_{\alpha \in \Delta}
c_1^T(\underline{\C}_{\alpha}) \Big), $$
where $|W|$ is the order of the Weyl group of $G$ and $\Delta $ is the set of
roots of $ G.$
\end{theorem}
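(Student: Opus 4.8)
This is Martin's comparison between nonabelian and abelian symplectic quotients, so the plan is to follow the argument of \cite{Martin, M2}, which realizes $M/\!\!/G$ as the base of a flag bundle embedded inside $M/\!\!/T$. Since the natural restriction $\g^*\to\t^*$ carries $\mu_G$ to $\mu_T$, we have $\mu_G^{-1}(0)\subseteq\mu_T^{-1}(0)$, so $Z:=\mu_G^{-1}(0)/T$ is a closed submanifold of $M/\!\!/T$ through an inclusion $\iota: Z\hookrightarrow M/\!\!/T$, while the orbit map $\mu_G^{-1}(0)/T\to\mu_G^{-1}(0)/G=M/\!\!/G$ is a fibre bundle $f: Z\to M/\!\!/G$ with fibre the flag manifold $G/T$. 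The idea is to compute $\int_{M/\!\!/G}$ by pulling back along $f$, pushing forward along $\iota$, and identifying the two resulting Euler-class corrections with the data in the statement.

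First I would record two auxiliary facts. (i) Running the inclusion $\mu_G^{-1}(0)\hookrightarrow M$ and the quotient $\mu_G^{-1}(0)\to M/\!\!/G$ through the Borel constructions for $G$ and for $T$ gives $f^*\circ k_G=\iota^*\circ k_T\circ r_T^G$ on $H^*_G(M)$; in particular $f^*k_G(a)=\iota^*k_T(r_T^G(a))$. (ii) On $\mu_T^{-1}(0)$ the moment map $\mu_G$ takes values in $\ann(\t)\cong(\g/\t)^*$, and since $0$ is a regular value of $\mu_G$ this exhibits $\mu_G^{-1}(0)$ as the transverse zero set of a $T$-equivariant section with fibre $(\g/\t)^*$. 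Descending to the quotient and identifying $(\g/\t)^*\cong\g/\t$ as $T$-modules via an invariant inner product, $Z$ is cut out transversally inside $M/\!\!/T$ by a section of the associated bundle $\mathcal E:=\mu_T^{-1}(0)\times_T(\g/\t)$; hence the normal bundle $\nu_Z$ of $Z$ in $M/\!\!/T$ is $\iota^*\mathcal E$, and $PD([Z])=e(\mathcal E)$ in $H^*(M/\!\!/T)$, up to the sign fixed by the chosen orientations. Because the fibre $G/T$ has tangent bundle $G\times_T(\g/\t)$, the same identification also gives $\nu_Z\cong T_fZ$, the vertical tangent bundle of $f$.

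Now for the Euler classes: choosing a set of positive roots $\Delta^+$ and a compatible complex structure, $\g/\t\cong\bigoplus_{\alpha\in\Delta^+}\C_\alpha$ as $T$-modules, whence $e(\mathcal E)=k_T\big(\prod_{\alpha\in\Delta^+}c_1^T(\underline{\C}_{\alpha})\big)$ and so $e(\nu_Z)=\iota^*k_T\big(\prod_{\alpha\in\Delta^+}c_1^T(\underline{\C}_{\alpha})\big)$; from $\nu_Z\cong T_fZ$ and the Poincar\'e--Hopf theorem applied fibrewise, $f_*e(\nu_Z)=\chi(G/T)=|W|$ in $H^0(M/\!\!/G)$; and since $\underline{\C}_{-\alpha}$ is conjugate to $\underline{\C}_{\alpha}$, one has $\prod_{\alpha\in\Delta}c_1^T(\underline{\C}_{\alpha})=(-1)^{|\Delta^+|}\big(\prod_{\alpha\in\Delta^+}c_1^T(\underline{\C}_{\alpha})\big)^2$. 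Combining these with the projection formula, the identity $\int_Z\iota^*\eta=\int_{M/\!\!/T}\eta\smile PD([Z])$, and the multiplicativity of $k_T$:
\begin{align*}
|W|\int_{M/\!\!/G}k_G(a)
&=\int_{M/\!\!/G}k_G(a)\smile f_*e(\nu_Z)=\int_Z f^*k_G(a)\smile e(\nu_Z)\\
&=\int_Z\iota^*k_T\Big(r_T^G(a)\smile\textstyle\prod_{\alpha\in\Delta^+}c_1^T(\underline{\C}_{\alpha})\Big)\\
&=\int_{M/\!\!/T}k_T\Big(r_T^G(a)\smile\textstyle\prod_{\alpha\in\Delta^+}c_1^T(\underline{\C}_{\alpha})\Big)\smile PD([Z])=\pm\int_{M/\!\!/T}k_T\Big(r_T^G(a)\smile\textstyle\prod_{\alpha\in\Delta}c_1^T(\underline{\C}_{\alpha})\Big),
\end{align*}
which is the asserted identity, the sign being absorbed into the orientation conventions as in \cite{Martin, M2}.

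The hard part is fact (ii) and its transversality claim: one must verify carefully that regularity of $0$ for $\mu_G$ forces $Z$ to be cut out cleanly inside $M/\!\!/T$, so that simultaneously $PD([Z])=e(\mathcal E)$ and $\nu_Z\cong T_fZ$, everything else (compatibility of the Kirwan maps, $\chi(G/T)=|W|$, the projection formula) being formal once this is in place. A secondary point, pertinent to the polygon application, is that the $SO(3)$- and $S^1$-actions on the relevant zero level sets are in general only locally free, so the argument should be run with rational coefficients and orbifold (co)homology; this does not affect the integration formula.
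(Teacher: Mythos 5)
The paper states this result without proof, quoting it directly from Martin \cite{M2}, and your argument is a correct reconstruction of Martin's own proof via the correspondence $M/\!\!/G \leftarrow \mu_G^{-1}(0)/T \hookrightarrow M/\!\!/T$, with the fibre integration over $G/T$ producing $|W|$ and the excess intersection producing the Euler class of the roots. The only cosmetic point is your residual $\pm$: if you keep the normal bundle $\nu_Z$ associated to $(\g/\t)^*\cong\bigoplus_{\alpha\in\Delta^+}\C_{-\alpha}$ and the vertical tangent bundle associated to $\g/\t\cong\bigoplus_{\alpha\in\Delta^+}\C_{\alpha}$, rather than identifying the two via an invariant inner product, the product of their Euler classes is literally $\prod_{\alpha\in\Delta}c_1^T(\underline{\C}_{\alpha})$ and no sign ever appears.
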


\subsection{The Volume Theorem}\label{volu}

\begin{theorem}\label{volume}
For generic $r \in \R^n_+,$
$$ \vol \, M(r)=- \, \frac{(2 \pi)^{n-3}}{2(n-3)!}\,  \sum_{I \text{long}}
(-1)^{n-|I|} \, \epsilon_I(r)^{n-3} $$ where $ I \subset \{ 1,
\ldots, n\}$ is long if and only if $  \epsilon_I(r)= \sum_{i \in I}r_i- 
\sum_{i
\in I^c}r_i  >0$.
\end{theorem}
\begin{proof}

The first step in the proof is to apply Theorem \ref{int.formula} and write the
volume of $M(r)$ as
$$ \vol \, M(r) = \frac{1}{2} \int_{\mathcal{S}_r/\!\!/S^1}
k_{S^1}(r^{SO(3)}_{S^1}(a) \smile c_1^{S^1}(\underline{\C}_{(1)}) \smile
c_1^{S^1}(\underline{\C}_{(-1)}))$$
where $a \in H^*_{SO(3)}(\mathcal{S}_r)$ is such that $k_{SO(3)}(a) $ is the
volume form on $\mathcal{S}_r /\!\!/ SO(3) =M(r)$ and $S^1$ is a (arbitrarily
chosen) maximal subtorus of $SO(3).$ (We have already entered in the formula
that the Weyl group of $SO(3)$ is $\mathbb{Z}/2\mathbb{Z}$ and that  the set of
roots of $SO(3)$ is $\{ \pm 1\}.$)

The second step is to apply the localization Theorem \ref{localization} to
localize the calculation of the integral above to data associated to the fixed
points set of the $S^1$-action. 

Remember that  the symplectic structure on $\mathcal{S}_r$ is defined by the
$2$-form  $\omega= \sum_{j=1}^n \frac{1}{r_j} p_j^* \omega_j,$ where 
$p_j:\mathcal{S}_r \rightarrow S^2_{r_j}$ is the natural projection on the
$j$-th factor  and $\omega_j $ is the volume form on the sphere $S_{r_j}^2.$ It
is a calculation to check that, if $\alpha $ is the volume form on the unit
sphere and $\omega_{FS}$ is the Fubini--Study form on $\C\P^1 \simeq S^2,$ then
$\omega_{j}= r_j \alpha = 2 r_j \omega_{FS}.$ 

On each sphere consider the line bundle $\mathcal{O}(2r_j) \rightarrow S^2_j.$
The tensor product of their pullbacks $p_j^* \mathcal{O}(2r_j)$ defines on
$\mathcal{S}_r$ the line bundle $\mathcal{L} := \mathcal{O}(2r_1) \boxtimes
\cdots \boxtimes \mathcal{O}(2r_n)$ (known in literature as the  prequantum line
bundle of $\mathcal{S}_r$).  Observe that $\omega_{FS}$ is the first Chern class
of $\mathcal{O}(1),$ precisely $ [\frac{\omega_{FS}}{2\pi}] = c_1
(\mathcal{O}(1)).$ It follows by the definition of the symplectic form $\omega$
on $\mathcal{S}_r$ that $$ \Big[\frac{\omega}{2\pi} \Big] =
c_1(\mathcal{O}(2r_1) \boxtimes \cdots \boxtimes \mathcal{O}(2r_n) )= c_1
(\mathcal{L}).$$

The construction above is well defined just for integral $r_1, \ldots, r_n,$ so
let us restrict to the case $r \in \mathbb{Z}^n_+$ and prove the stated result
for the volume of $M(r).$  Then, for each $\lambda \in \R^+,$ we get the volume
of $M(\lambda r)$ by rescaling. Indeed, $\vol \, M(\lambda r) = (\lambda)^{n-3}
(\vol \, M(r)),$ thus the formula holds also for rational $r_i.$ Finally, by
density, the result extends to $r \in \R^n_+.$ 

Let $a$ be the $(n-3)$-th power of the first equivariant Chern class
$c_1^{SO(3)}(\mathcal{L})$ of the prequantum line bundle $\mathcal{L}$
(normalized by a factor $\frac{(2 \pi)^{n-3}}{(n-3)!}$ ). Then its image $k(a)$
through the Kirwan map $k:H^*_{SO(3)}(\mathcal{S}_r) \rightarrow
H^*(\mathcal{S}_r/\!\!/SO(3))$ is the volume form on $M(r):$ $$ \vol \, M(r)=
\frac{(2 \pi)^{n-3}}{(n-3)!} \int_{M(r)} k(c_1^{SO(3)}(\mathcal{L})^{n-3}).$$
We now apply the equivariant integration formula (Theorem \ref{int.formula}).
The restriction  $r^{SO(3)}_{S^1}$ maps   $c_1^{SO(3)}(\mathcal{L})^{n-3}$ to
$c_1^{S^1}(\mathcal{L})^{n-3},$ thus
$$  \vol \, M(r)= \frac{1}{2} \frac{(2 \pi)^{n-3}}{(n-3)!}
\int_{\mathcal{S}_r/\!\!/S^1}  k_{S^1}( c_1^{S^1}(\mathcal{L})^{n-3} \smile
c_1^{S^1}(\underline{\C}_{(1)}) \smile c_1^{S^1}(\underline{\C}_{(-1)}))$$ and
the first step is done.

In order to apply the localization Theorem \ref{localization} we make an
explicit choice of a maximal subtorus $S^1 \subset SO(3):$ let $S^1$ be the
subgroup that acts on each sphere by rotation along the $z$-axis.
This action is Hamiltonian with moment map the height function $$
\begin{array}{rcl}
 \mu:S^2_{r_j} & \rightarrow & \mathfrak{s}^1 \simeq \R \\
 e_j=(x_j, y_j, z_j) & \mapsto &ht( e_j)= z_j.
\end{array}$$ Note that the fixed points of this action are the north pole $N_j$
and the south pole $S_j$ and the image $\mu (S^2_{r_j}) $ is the segment $[\mu
(S_j), \mu (N_j)]= [-r_j, r_j]$ (in agreement with the convexity Theorem).

These observations extend easily to the product manifold $\mathcal{S}_r:$
consider on $\mathcal{S}_r$ the circle action by rotation around the $z$-axis of
each sphere.
This action is Hamiltonian and, by linearity, has moment map the sum of the
heights, i.e. $\mu (e_1, \ldots, e_n) = \sum_{j=1}^n z_{j}.$ 

A point $(e_1, \ldots , e_n)$ is fixed by this action if and only if $e_{j} \in
\{ N_j, S_j\}$ for each $j\in \{ 1, \ldots, n\},$ and these points are isolated.
For these points we introduce a more handy notation: let $I $ be any subset of $
\{ 1, \ldots, n\}.$ We define $f_I$ to be the point  $(e_1, \ldots , e_n)\in
\mathcal{S}_r$  such that $e_j $ is a north pole if $j\in I$ and a south pole if
$j \in I^c$. Thus all the fixed points are an $f_I$ for some index set $I$ and
$$ \mu(f_I) =  \sum_{i \in I}r_i -  \sum_{i \in I^c}r_i = \epsilon_I(r).$$
\begin{remark}
Note that $ \epsilon_I(r)  \ne 0$ for all $I$ because we assumed $r$ generic.
This implies that $0$ is a regular value of the moment map $\mu.$ In fact $d_x
\mu $ is identically $0$ if and only if $x= f_I:$ for each tangent vector $v=
(v_1, \ldots ,v_n)\in T_x \mathcal{S},$ $d_x \mu (v) = \sum_j \zeta_j$ where
$\zeta_j$ is the third component of $v_j.$ So $d_x \mu \equiv 0 \iff \zeta_j=0
\quad \forall j \iff x= f_I \quad \textit{for some} \quad I.$
\end{remark}
From the Atiyah and  Guillemin-Sternberg convexity Theorem, the image $\mu
(\mathcal{S}_r)$ is the convex hull of the points $\mu (f_I),$ i.e. $$\mu
(\mathcal{S}_r) = \Big[-\sum_{i=1}^n r_i,\sum_{i=1}^n r_i \Big].$$  The idea is
now to apply Theorem \ref{localization} to calculate the volume of
$\mathcal{S}_r/\!\!/S^1.$ Choose $p_0=0 $ and $p_1 > \sum_{i=1}^n r_i,$ so that
$\mu^{-1}(p_1)$ is empty, this implies that the integral over
$\mathcal{S}_r/\!\!/S^{1}(p_1)$ is zero and 

$$ \int_{\mathcal{S}_r/\!\!/S^{1}}  k(\tilde{a}) =\sum \int_{X^{H} /\!\!/T (q)}
k_q(\lambda_H(\tilde{a}_{|_{X^H}}))$$
where the sum is made over the walls $\mu(X^{H_i})$ that the path  $Z=[0, p_1]
\subset \R$ crosses at $q_i.$

Moreover note that  the walls in  $\mu (\mathcal{S}_r)$ are just the points 
$\mu (f_I),$ and that the path $Z$ crosses  the walls  $\mu (f_I)$ only for
those $I$ such that  $ \epsilon_I(r) >0.$ Let $\mathcal{I}$ be the family of all
these index sets $I$. 
Since that the quotient spaces $X^H /\!\!/T (q)$ are just points, we obtain $$
\int_{\mathcal{S}_r/\!\!/S^{1}}  k(\tilde{a}) = \sum_{I \in \mathcal{I}}
k_I(\lambda_I(\tilde{a}_{|_{f_I}})).$$

Now we will study the normal bundle $\nu f_I$ in order to work out the necessary
details to use the equivariant description  of $\lambda_{f_I}$ (see Proposition
\ref{lequiv}).

The $f_I$'s are points thus for each $I$ the normal bundle $\nu f_I$ is the
direct sum of copies of $T_{N_j} S_{r_j}^2$ and   $T_{S_j} S_{r_j}^2.$ Precisely
$$ \nu f_I \simeq \C^{|I|}_{(1)} \oplus \C^{n-|I|}_{(-1)}.$$
The equivariant Segre classes that appear in Proposition \ref{lequiv} formally
lie in $H^*_{T/H}(f_I),$ where $H \simeq S^1 $ is the subgroup of $T$ associated
to the wall $\mu(f_I);$ in our case $T$ is $S^1$ itself, so $s^w(\nu f_I)$ lies
in the de Rham cohomology ring $H^*(f_I).$
The  bundle $\nu f_I$ has rank one, and the $j$-th Chern classes $c_j(\C_{(\pm
1)})$ are zero for each $I$ and $j$ (because, for each $I,$ $\nu f_I$ is a line
bundle over a point). Then $$c^w(\nu f_I)=  (-1)^{n- |I|}$$ and $$ s^w_j(\nu
f_I) \left\{
\begin{array}{ll} 
(-1)^{n- |I|} & j=0,\\
0 & \textrm{otherwise}.\\
\end{array} \right.$$
Moreover, the greatest common divisor $k(f_I )= 1$ for each $I$ because the
weights are all $\pm 1.$

We have now all the ingredients to apply the equivariant formula in Proposition
\ref{lequiv} and calculate $\lambda_{I}(\tilde{a}_{|_{f_I}}),$ with $\tilde{a}=
c_1^{S^1}(\mathcal{L})^{n-3} \smile c_1^{S^1}(\underline{\C}_{(1)}) \smile
c_1^{S^1}(\underline{\C}_{(-1)}).$

From the construction of the line bundle $\mathcal{L}$ we made above, it follows
that $\mathcal{L}_{|_{f_I}} = \C_{(\epsilon_I(r))}$ where again $I$ is the index
set that ``detects'' the north poles. Thus $$c_1^{S^1}(\mathcal{L})_{|_{f_I}}=
(\epsilon_I(r)) u, $$ where $u$ is the positive generator of the equivariant
cohomology of a point $H^*_{S^1}(f_I).$ Similarly,
$$c_1^{S^1}\Big(\underline{\C}_{(1)}\Big)_{|_{f_I}}= u, \quad \quad
c_1^{S^1}\Big(\underline{\C}_{(-1)}\Big)_{|_{f_I}}= -u.$$ So $$
\tilde{a}_{|_{f_I}} = -(\epsilon_I(r))^{n-3} u^{n-1}$$ and
$$\lambda_I(\tilde{a}_{|_{f_I}})= - (-1)^{n- |I|} (\epsilon_I(r))^{n-3}
u^{n-1}.$$
To finish the proof we should now apply the Kirwan map $k_q: H^*_{T/H} (X^H)
\rightarrow H^*(X^H/ \!\! /T (q))$ as in Theorem \ref{localization}. Since in
our case $T$ is $S^1$ itself and the fixed points sets $X^H$ are the $f_I$'s,
the map $k_q: H^*(f_I) \rightarrow H^*(f_I)$ is the identity. Thus, summing on
all the admissible $I$, the result follows.

\end{proof}

\subsection{Examples}\label{esvolume}
Let $\Delta^{\!0} $ be the chamber in $\Xi\in \R^5$ determined by its collection
of short sets
\begin{align*} \mathcal S(\Delta^{\!0}) =& \big\{ \{i\} \mid i=1,\ldots, 5\}
\cup \{ \{j,k\} \mid
 j,k= 1,2,4,5 \big\} \\ &
\cup\big\{ \{i, j,k\} \mid  i,j,k= 1,2,4,5\big\}.
\end{align*}
Consequently the collection of $r^0$-long sets is  
$$ \mathcal{I} (r^0) = \big\{ \{3,j\},\{3,j,k\} : j,k= 1,2,4,5 \big\} \cup
\big\{I\subseteq \{1, \ldots, 5 \} : |I|= 4,5  \big\}.$$
Note that the chamber $\Delta^{\!0}$ is not empty, for example $\frac{1}{7}
\big( 1,1,3,1,1 \big) \in \Delta^{\!0}$.
Then, by Theorem \ref{volume}, it follows (by plain computation) that the volume
of the associated
symplectic quotient $M(r^0)$ is 
$$ \vol \, M(r^0) = 2 \pi^2 (r_1^0 +r_2^0 -r_3^0
+r_4^0 +r_5^0 )^2.$$
Because the perimeter $\sum_{i=1}^n r_i = 1$ is fixed on $\Xi,$  one also
obtains 
$ \vol \, M(r^0) = 2 \pi^2 (1-2r_3^0 )^2$.

Consider now the adjacent chamber $\Delta^{\! 1}$ characterized by 
\begin{align*} \mathcal S(\Delta^{\!1}) = &\big\{ \{i\} \mid i=1,\ldots, 5\}
\cup \{ \{j,k\} \mid
 j,k= 1,2,4,5 \big\} \cup\{ 1,3\} \\ &
\cup\big\{ \{1, j,k\} \mid  j,k= 2,4,5\big\}.
\end{align*}
Also the chamber $\Delta^{\! 1}$ is not empty, as for example the lengths vector
$\frac{2}{11} \big( \frac{1}{2},1,2,1,1 \big)$ is in $\Delta^{\! 1}$.
The closures of $\Delta^{\!0}$ and $\Delta^{\!1}$ intersect in the wall of
equation $\epsilon_{\{1,3\}} (r)= \epsilon_{\{ 2,4,5\}} (r) =0$.
This means that $\{1,3\}$ is $r^0$-long and $r^1$-short, while its complement
$\{2,4,5\}$ is $r^1$-long and $r^0$-short. This is the
only difference between  in the collections of $r^0$-long sets and  $r^1$-long
sets.
Hence, applying Theorem \ref{volume}, we get 
$$\vol \, M(r^1) = 4 \pi^2 r_1^1 ( r_2^1 - r_3^1 + r_4^1 +r_5^1 ).$$
Again, using the fixed perimeter condition, one obtains
$\vol \, M(r^1)= 4 \pi^2 r_1^1 (1 -r_1^1 -2r_3^1)$.

\section{Crossing the Walls}\label{crossing the walls}
In this Section we 
explicitly describe how the diffeotype of the manifold $M(r)$ changes as $r$
crosses a wall in  $\Xi= \mu_{_{U(1)^n}}(Gr(2,n))$. 

The chambers $\Delta^{\! i}$ of regular values in the convex polytope $\Xi$ are
convex polytopes themselves. They are separated by walls, i.e. by the images $
\mu_{_{U(1)^n}} (Gr(2,n)^{S^1})$ of the sets of points fixed by the circle
subgroups of $U(1)^n.$ 
For $r^0$ and $r^1$ in different chambers $\Delta^{\! 0} $ and $\Delta^{\! 1},$
the symplectic reductions $M(r^0)$ and $M(r^1)$ are related by a birational map
that can be described in terms of blowing up and down submanifolds. This follows
from a general construction due independently to Brion-Procesi \cite{brion} and
to Guillemin--Sternberg \cite{gs89}. In this Section we show that these
submanifolds are resolutions of the singularity corresponding to the degenerate
polygon in the singular quotient $M(r^c)$ (where $r^c$ is the wall-crossing
point) and characterize them in terms of polygon spaces of lower dimension.

Through all the paper we assume a single wall-crossing, meaning that the wall
crossing point is not on a intersection of walls, but lies on one and only one
wall. This also implies that the quotient $M(r^c)$ has only one critical point.
The assumption is not restrictive since any non single wall-crossing can be
decomposed in a finite number of subsequent single wall-crossing, cf.
\cite{gs89}.

In \cite{gs89} Guillemin and Sternberg give a thorough analysis of  wall
crossing problems relative to (quasi-free) $S^1$-actions. They also point out
that their construction can be made $H$-equivariant, when $H$ is any compact
group commuting with the $S^1$-action. This is our case: in fact we will first
perform the symplectic reduction by a complement $H$ of the $S^1$ associated to
the wall and then apply the analysis as in \cite{gs89} to the remaining
$S^1$-action. Still there is a small subtlety here, since the action of $U(1)^n$
is not effective. 

In this Section we prove the following Theorem \ref{WCtheo}. Before stating
that, let us introduce some notation: consider $r^0$ and $r^1$ regular values of
$\mu_{U(1)^n}$ lying in different chambers, $\Delta^{\! 0}$
and $\Delta^{\! 1}$ respectively, separated by the wall of equation 
\begin{equation}\label{muro1}
\varepsilon_{I_p} (r)=0.
\end{equation}
Assume also that the lengths vectors $r^0 \in \Delta^{\!0} $ and $r^1 \in
\Delta^{\!1}$ satisfy
\begin{equation}\label{direction}
\varepsilon_{I_p} (r^0)> 0 \quad \textnormal{and} \quad \varepsilon_{I_p} (r^1)
<0 
\end{equation}
and call $W_{I_p}$ the wall of equation \eqref{muro1} together with the
wall-crossing direction from $\Delta^0$ to $\Delta^1$.
Moreover, for any index set $I \subset \{ 1, \ldots n\}$, let $M_{I}(r)$ be the
(eventually empty) submanifold of $M(r)$ of those polygons such that the edges
$e_i$, for $i \in I$, are positive proportional to each other. Precisely 
$$ M_{I} (r) := \widetilde{M}_{I} (r)/ SO(3)$$
where
\begin{equation}\label{eq:Mtilde}
\widetilde{M}_{I} (r) := \{ (e_1, \ldots, e_n) \in \mathcal{S}_r \mid
\sum_{j=1}^n e_j =0, \, e_i= \lambda_k e_k, \, \, \forall i,k \in I, \, 
\lambda_k \in \R_+ \}. 
\end{equation}

\begin{theorem}\label{WCtheo}
Let the lengths vector $r$ cross a wall $W_{I_p} $ in $\Xi$ as above. Then the
diffeotype of the moduli space of polygons $M(r)$ changes by blowing up
$M_{I_p^c}(r^0) \simeq \C\P^{p-2}$ and blowing down the projectivized normal
bundle of $M_{I_p} (r^1) \simeq \C\P^{q-2}.$

The polygon spaces $M_{I_p}(r^1)$ and $M_{I_q}(r^0)$ are resolutions of the
singularity corresponding to the degenerate polygon $[P^c]$ in $M(r^c),$ and
both are dominated by the blow up $\widetilde{ M}$ of $M(r^c)$ at the singular
point, with exceptional divisor $\C\P^{p-2} \times \C\P^{q-2}$.
\end{theorem}

\begin{proof}
Let $H':= \{ \textnormal{diag} (e^{i \beta_1}, \ldots , e^{i \beta_{n-1}}, 1)
\mid e^{i \beta_j} \in S^1 \quad\forall j= 1, \ldots, n-1 \} \subset U(1)^n$ be
a complement of the diagonal circle $\{ \textnormal{diag} (e^{i \theta}, \ldots,
e^{i \theta})\} \subset U(1)^n$.
The group $H'$ acts effectively on $Gr(2,n)$ by restriction of the
$U(1)^n$-action with associated moment map 
\begin{displaymath}
\begin{array}{rcl}
\mu_{H'}: Gr(2,n) & \rightarrow & \R^{n-1}\\
(a,b) & \mapsto & \frac{1}{2} (|a_1|^2 +|b_1|^2, \ldots , |a_{n-1}|^2
+|b_{n-1}|^2).\\
\end{array}
\end{displaymath}
The moment polytope $ \mu_{H'}(Gr(2,n)) $ is the image of $\Xi$ via the
projection map $\R^n \rightarrow \R^{n-1}$ that drops the last coordinate.
Since the action of $U(1)^n$ is not effective (the diagonal circle fixes every
point), one can easily see that both the quotients 
$$ 
\mu_{U(1)^n}^{-1} (r_1, \ldots, r_n) / U(1)^n \quad \textnormal{and} \quad
\mu_{H'}^{-1} (r_1, \ldots, r_{n-1}) / H'
$$
are diffeomorphic, and each is the moduli space of polygons $M(r)$. 
Note in particular that $r_n$ is
uniquely determined by $r_1, \ldots, r_{n-1}$. In other words, if $(r_1, \ldots,
r_n)$ are coordinates in $\Xi$, then the coordinates $ r_1, \ldots, r_{n-1}$ on
the projected polytope $ \mu_{H'}(Gr(2,n)) $  satisfy
\begin{displaymath}
r_n = 1 - \sum_{i=1}^{n-1} r_i.
\end{displaymath}
It follows that the wall $\epsilon_{I_p}(r) = 0 $ is mapped in $
\mu_{H'}(Gr(2,n)) $ onto the wall of equation
\begin{equation}\label{muro}
 \sum_{i \in I_p} r_i = \frac{1}{2}.
\end{equation}
In particular external walls satisfy $2 \sum_{i \in I_p} r_i = 1$ for $I_p$ of
cardinality $1$ or $n-1$. 
Note that whenever \eqref{muro} holds, then the condition $\sum_1^n r_i=1$
implies $2 \sum_{i \in I_p^c} r_i = 1$, where $I_p^c$ is the complement of $I_p$
in $ \{ 1, \ldots, n \}$. Therefore there exists at least one index $i \in I_p$
and at least one index $j \in I_p^c$ such that $r_i \neq 0 $ and $r_j \neq 0$.
Since $M(r)$ is symplectomorphic to $M(\sigma (r))$ for any permutation $\sigma$
of the $n$ edges, it is not restrictive to assume $I_p = \{ 1, \ldots p\}$ and
$r_1 \neq 0$ as well as $r_n \neq 0$.

The orientation of the circle associated to $W_{I_p}$ is determined accordingly
with the wall-crossing direction. This means that among the directions 
$$v^{\pm} = \pm (\,  \underbrace{-1,\ldots,-1}_p, 0, \ldots, 0)$$
normal to the wall \eqref{muro}, we wish to choose the one that has positive
inner product with the vector $(r^1- r^0) \in \R^{n-1}$. This is the case for
$v^-$ (it follows from assumption \eqref{direction}), and therefore the circle
associated to $W_{I_p}$ is 
$$ S^1:= \{ \textnormal{diag} (\,  \underbrace{e^{-i \theta}, \ldots, e^{-i
\theta}}_p, 1, \ldots, 1)\} \subset H' \subset U(1)^n.$$
Let $r^c$ be the wall-crossing point and let 
$$H:= \{ \textnormal{diag} (1, e^{i \theta_2}, \ldots, e^{i \theta_{n-1}}, 1)
\mid e^{i \theta_j} \in S^1 \, \forall j= 2, \ldots, n-1 \}$$ be a complement of
$S^1$ in $H'$. The group $H$ acts on $Gr(2,n)$ in a Hamiltonian fashion with
associated moment map 
$$ \mu_H (a,b) = \frac{1}{2} (|a_2|^2 +|b_2|^2, \ldots , |a_{n-1}|^2
+|b_{n-1}|^2).$$
We now analyze the $S^1$-action on $\mu_H^{-1} (r_2^c, \ldots, r_{n-1}^c) / H$.
In particular, if $\mu_{S^1} $ is the moment map for the residual $S^1$-action
on $\mu_H^{-1} (r_2^c, \ldots, r_{n-1}^c) / H$, we will describe the singular
reduced space $\mu_{S^1}^{-1}(0) / S^1 $ as in \cite{guillemin}, obtaining also
the two resolutions of the singularity in Theorem \ref{WCtheo}. 

To this aim, note that the fixed points set of the $S^1$-action consists only of
the point $[ P^c]$:
\begin{equation} \label{P}
\Big( \mu_H^{-1} (r_2^c, \ldots, r_{n-1}^c) / H \Big)^{S^1}= [ P^c] := {\small 
\left( \begin{array}{cc} 
\sqrt{1- 2 \sum_2^p r_i^c}& 0\\
\sqrt{2 r_2^c} & 0\\
\vdots& \vdots\\
\sqrt{2 r_p^c}& 0\\
0 & \sqrt{2 r_{p+1}^c}\\
\vdots& \vdots \\
0 & \sqrt{2 r_{n-1}^c}\\
0& \sqrt{1- 2 \sum_{p+1}^{n-1} r_i^c}
\end{array} \right)}.
\end{equation}
To prove \eqref{P} note that an element $(a,b) \in \mu_H^{-1} (r_2^c, \ldots,
r_{n-1}^c) / H$ is fixed by the $S^1$-action if and only if 
$$
\left\{ \begin{array}{ll}
a_i=0, & \forall i=p+1, \ldots, n, \\
b_j=0 & \forall j=1, \ldots, p.
\end{array} \right.$$
It then follows from the moment map conditions that $|a_i| = \sqrt{2 r_i^c}$ for
all $i=2, \ldots, n$ and $|b_j| = \sqrt{2 r_j^c}$ for all $j=p+1, \ldots, n$.
Recalling that $\sum_1^n |a_i|^2= 1$ and $\sum_1^n |b_i|^2= 1$ since $(a,b) \in
St_{2,n}$, we get that 
$$a_1= \sqrt{1- 2 \sum_2^p r_i^c} \,\, e^{i \phi_1} \quad \textnormal{and} \quad
b_n = \sqrt{1- 2 \sum_{p+1}^{n-1} r_i^c} \,\, e^{i \phi_n}$$ for some $\phi_1,
\phi_n \in [0, 2\pi[ $. Modulo the $SU(2)$-action, we can then take $a_1$ and
$b_n$ to be real.
Now modulo the $H$ action, we can take $e^{-i \phi_1} a_i$, $i=2, \ldots, p$,
and $e^{i \phi_n} b_j$, $j=p+1, \ldots, n-1$, to be real as well, and \eqref{P}
follows. 

In a neighborhood $\mathcal U \subset \mu_H^{-1} (r_2^c, \ldots, r_{n-1}^c) / H$
of the fixed point $[P^c]$ we give a local system of coordinates $(w_2, \ldots,
w_p, z_{p+1}, \ldots, z_{n-1}) \in \C^{n-2}$, centered at $[P^c]$, such that
\begin{equation}\label{coordlocali}
P= {\small 
\left( \begin{array}{cc} 
l_1& 0\\
l_2 & w_2\\
\vdots& \vdots\\
l_p& w_p\\
z_{p+1} & m_{p+1}\\
\vdots& \vdots \\
z_{n-1} & m_{n-1}\\
0& m_n
\end{array} \right)} \quad \forall P \in \mathcal U
\end{equation}
with $l_j$ and $m_k$ real functions of $(w,z)$ and of the wall-crossing value
$r^c$.
These local coordinates can be determined as follows.
Given $(a,b) \in \mathcal U$ consider a non-zero minor, for example the one
formed by the first and the last row (this clearly does not vanish for $\mathcal
U$ neighborhood of $[P^c]$ small enough). Then using the $U(2)$ action we can
rewrite $(a,b)$ as 
\begin{equation}\label{intornoP}
{\small 
\left( \begin{array}{cc} 
\tilde{l}_1& 0\\
\tilde{z}_2 & \tilde{w}_2\\
\vdots& \vdots\\
\tilde{z}_{n-1} & \tilde{w}_{n-1}\\
0& \tilde{m}_n
\end{array} \right)}
\end{equation}
where $\tilde{l}_1 = |a_1|^2 + |b_1|^2$, $\tilde{m}_n =|a_n|^2 + |b_n|^2$ and
\begin{equation}\label{zw}
\tilde{z}_i = \frac{a_ib_n-a_nb_i}{a_1b_n-a_nb_1}, \quad \tilde{w}_i =
\frac{a_1b_i-a_ib_1}{a_1b_n-a_nb_1}. 
\end{equation}
Writing $\tilde{z}_j= |\tilde{z}_j| e^{i \tilde{\theta}_j}$ for $j=2, \ldots,p$
and $\tilde{w}_k= |\tilde{w}_k| e^{i \tilde{\theta}_k}$ for $k=p+1, \ldots,n-1$,
one can see that, modulo the $H$-action, \eqref{intornoP} becomes
\eqref{coordlocali}, where
$$ w_j=  e^{-i \tilde{\theta}_j} \tilde{w}_j, \quad l_j = |\tilde{z}_j|= \sqrt{2
r_j^c - |w_j|^2} \quad \forall j=2, \ldots, p,$$
$$z_k = e^{-i \tilde{\theta}_k} \tilde{z}_k, \quad m_k = |\tilde{w}_k| = \sqrt{2
r_k^c - |z_k|^2}\quad \forall k=p+1, \ldots, n-1$$
and consequently
$$ l_1 = \Big(1 - 2 \sum_{j=2}^p r_j^c + \sum_{j=2}^p |w_j|^2 -
\sum_{k=p+1}^{n-1} |z_k|^2 \Big)^{1/2};$$
$$ m_n= \Big( 1-  2 \sum_{k=p+1}^{n-1} r_k^c -\sum_{j=2}^p |w_j|^2 +
\sum_{k=p+1}^{n-1} |z_k|^2 \Big)^{1/2}.$$
In such a neighborhood $\mathcal U$ of $[P^c] $ the action of $S^1$ is then 
$$ \textnormal{diag}(e^{-i \theta}, \ldots, e^{-i \theta}, 1, \ldots, 1) \cdot
(w_2, \ldots, w_p, z_{p+1}, \ldots, z_{n-1})$$ 
$$= (e^{-i \theta} w_2, \ldots, e^{-i \theta} w_p, e^{i \theta} z_{p+1}, \ldots
, e^{i \theta} z_{n-1})$$
with associated moment map
$$ \mu_{S^1} (w_2, \ldots, w_p, z_{p+1}, \ldots, z_{n-1}) = \frac{1}{2} \Big(
-\sum_{j=2}^p |w_j|^2 + \sum_{k=p+1}^{n-1} |z_k|^2 \Big).$$

The critical level set $$ \sum_{j=2}^p |w_j|^2= \sum_{k=p+1}^{n-1} |z_k|^2 $$ is
a conic subset of $\C^{n-2}$. Precisely, it is the cone over the product of the
two spheres $S^{2p-3}= \{ w_j \mid \sum_{j=2}^p |w_j|^2=1 \}$ and  $S^{2q-3}= \{
z_k \mid \sum_{k=p+1}^{n-1} |z_k|^2 =1\}$, where $q:=n-p$. The action of $S^1$
on this product is free so the orbit space $W= (S^{2p-3} \times S^{2q-3})/ S^1$
is a compact manifold and the quotient $\mu_{S^1}^{-1}(0) / S^1$ in the
neighborhood $\mathcal U $ of $[P^c]$ looks like a cone $C_W$ over $W$ with
vertex at $\{0\} = [P^c]$.

From the action $(w_2, \ldots, w_p) \mapsto (e^{-i \theta} w_2, \ldots, e^{-i
\theta} w_p)$ of $S^1$ on $S^{2p-3} $ one gets the Hopf fibration $$\pi:
S^{2p-3} \rightarrow \C\P^{p-2}.$$
Since $S^1$ acts also on $S^{2q-3}$, one can consider the associated bundle
$$ (S^{2p-3} \times S^{2q-3}) / S^1 \rightarrow \C\P^{p-2}$$
obtaining a description of $W$ as a fiber bundle over $\C\P^{p-2}$. Reversing
the roles of $p$ and $q$ we get $W$ as a fiber bundle over $\C\P^{q-2}$.
Associated with these two description of $W$ we obtain the desingularizations of
$M(r^c)$ as in Theorem \ref{WCtheo}.
In fact, since the action of $S^1$ on $S^{2q-3}$ extends to a  linear action on
$\C^{q-1}$ one can form the associated vector bundle
\begin{equation}\label{W-}
\xymatrix{W_- := \hspace{-1cm} & S^{2p-3} \times_{\pi} \C^{q-1} \ar[d]\\
  & \C\P^{p-2}.}
\end{equation}
On this bundle there is a blowing down map $\beta_- : W_- \rightarrow C_W$,
$$ \beta_- (w_2, \ldots, w_p, z_{p+1}, \ldots, z_{n-1})= \Big(
\sqrt{\sum_{i=p+1}^{n-1} |z_i|^2} \,\,w_2, \ldots, \sqrt{\sum_{i=p+1}^{n-1}
|z_i|^2} \,\,w_p, z_{p+1}, \ldots, z_{n-1}\Big)$$
and an embedding of $\C\P^{p-2}$ as the zero section of the bundle \eqref{W-}:
\begin{displaymath}
\begin{array}{rcl}
\iota: \C\P^{p-2} & \rightarrow& W_- \\
{[w_2, \ldots, w_p]} & \mapsto & (w_2, \ldots, w_p, 0 \ldots, 0).\\
\end{array}
\end{displaymath}
Moreover, the image of $\C\P^{p-2}$ in $W_-$ gets blown down to $\{0\} \in C_W$,
and $\beta_-$ maps $W_- \setminus \C\P^{p-2}$ diffeomorphically onto $C_W
\setminus \{0\}$.

On the other hand, reversing again the roles of $p$ and $q$ one obtains a
desingularization $W_+$ of $M(r^c)$, where $W_+ $ is now the bundle 
\begin{equation}\label{W+}
\xymatrix{W_+ := \hspace{-1cm} & \C^{p-1} \times_{\pi} S^{2q-3} \ar[d]\\
  & \C\P^{q-2}}
\end{equation}
with associated blowing down map $\beta_+ : W_+ \rightarrow C_W$ and embedding
$\iota : \C\P^{q-2} \rightarrow W_+$ as the zero section.

Via these two desingularizations we obtain our description of the birational map
from $M(r^0)$ to $M(r^1)$ with $r^0$ and $r^1$ as in  Theorem \ref{WCtheo}.

In fact for $-\sum_{j=2}^p |w_j|^2 + \sum_{k=p+1}^{n-1} |z_k|^2 = - \epsilon$
the orbit space $\mu_{S^1}^{-1}(- \epsilon) / S^1 $ is identical topologically
with $W_-$. Note that $r_1$ is then uniquely determined 
$$ r_1 =l^2_1 = 1- \sum_{j=2}^p r_j^c + \epsilon$$
and consequently 
$$ r_n =  1- \sum_{k=p+1}^{n-1} r_k^c - \epsilon.$$
This means that $r^0 := (r_1, r_2^c, \ldots, r_{n-1}^c, r_n )$, with $r_1$ and
$r_n$ as above, satisfies $\epsilon_{I_p}(r^0) > 0$.
Using the right hand side of diagram \eqref{beauty} we can give a geometric
characterization of the $\C\P^{p-2}$ that is blown down by the map $\beta_-$,
describing it as a lower dimensional polygon space.

As seen above, $\C\P^{p-2}$ is embedded in $W_-$ as the zero section with
respect to the local coordinates $ (w,z)$, i.e. 
\begin{equation}\label{eq:CP2}
\C\P^{p-2} = \{ (w_1, \ldots, w_p, z_{p+1}, \ldots, z_{n-1}) \mid z_k =0 \,\,
\forall \, k= p+1, \ldots, n-1 \}.
\end{equation}
From \eqref{zw} it follows
\begin{equation}\label{zk0}
z_k=0 \iff a_kb_n - a_n b_k=0.
\end{equation}
We now want to describe $\C\P^{p-2}$ performing the reductions as the right hand
side of the diagram \eqref{beauty}.
To this aim, start from $(a,b) \in p^{-1}(\mu_{U(1)^n}^{-1} (r)) \subset
St_{2,n}$ satisfying \eqref{zk0} for every $k= p+1, \ldots, n-1$ and consider it
as an element in $\prod_j S^3_{\sqrt{2 r_j}}$ via the inclusion map $\imath$ as
in \eqref{3sphere}.
Recall that the Hopf map $H^n$ maps $(a,b)$ to $(e_1, \ldots, e_n) \in \prod_j
S^2_{r_j}$ where $e_i= 1/2 (|a_i|^2 - |b_i|^2, 2 \textnormal{Re} (\bar{a}_i
b_i), 2 \textnormal{Im} (\bar{a}_i b_i)).$

Condition \eqref{zk0} then implies that for every $k= p+1, \ldots, n-1$ the
vectors $e_k $ are positive multiples of each other, i.e. 
\begin{equation}\label{eq:paralleli}
\forall k = p+1, \ldots, n-1, \, \exists \lambda_k \in \R_+ \, \textnormal{ s.t.
} e_k = \lambda_k e_n.
\end{equation}
In fact, if $a_n=0$ then $a_k=0$  (note that $b_n$ can not vanish simultaneously
with $a_n$ since we assumed $r_n \neq 0$) and clearly
$$ e_k = \Big(- \frac{|b_k|^2}{2}, 0, 0 \Big) = \frac{|b_k|^2}{2 |b_n|^2} \Big(-
|b_n|^2, 0, 0 \Big)= \frac{|b_k|^2}{2 |b_n|^2} e_n.$$ 
Similarly if $b_n=0$ then 
$$ e_k = \Big(\frac{|a_k|^2}{2}, 0, 0 \Big) = \frac{|a_k|^2}{2|a_n|^2} e_n.$$
If both $a_n$ and $b_n$ are non-zero, from  \eqref{zk0} we obtain 
$ b_k \bar{a}_k = \frac{b_n}{a_n} |a_k|^2$. This implies 
$$ |b_k|^2 = \frac{\bar{b}_n}{\bar{a}_n} \bar{a}_k b_k = 
\frac{|\bar{b}_n|^2}{|\bar{a}_n|^2} |a_k|^2.$$
It then follows that $$ e_k = |a_k|^2 \Big( \frac{1}{2} \big(1 -
\frac{|\bar{b}_n|^2}{|\bar{a}_n|^2} \big), Re \frac{b_n}{a_n}, Im\frac{b_n}{a_n}
\Big) = \frac{|a_k|^2 }{2 |\bar{a}_n|^2} e_n$$
proving \eqref{eq:paralleli}.

Therefore $$ H^n(\imath \{(a,b) \in  p^{-1}(\mu_{U(1)^n}^{-1} (r)) \mid  a_kb_n
- a_n b_k=0 \quad \forall \, k= p+1, \ldots, n-1 \}) = \widetilde{M}_{I_p^c} (r)
$$
and the projective space \eqref{eq:CP2} is then the quotient $ M_{I_p^c}(r):=
\widetilde{M}_{I_p^c} (r)/ SO(3)$.
In words, $M_{I_p^c}(r)$ is the submanifold of $M(r)$ of those $n$-gons such
that the last $n-p$ edges are positive multiple one of the other.
Note that for $r \in \Delta^0$ this is a non-empty submanifold (in fact it is a
$\C\P^{p-2}$). In particular, $M_{I_p^c}(r)$  is naturally diffeomorphic to the
$(p-2)$-dimensional polygon space $M(r_1, \ldots, r_p, \sum_{k=p+1}^n r_k).$

On the other hand, for $$-\sum_{j=2}^p |w_j|^2 + \sum_{k=p+1}^{n-1} |z_k|^2 = 
\epsilon$$ the orbit space $\mu_{S^1}^{-1}( \epsilon) / S^1 $ is identical
topologically with $W_+$. Again $r_1$ and $r_n $ are uniquely determined:
$$ r_1 = 1- \sum_{j=2}^p r_j^c - \epsilon \quad \textnormal{and} \quad 
r_n =  1- \sum_{k=p+1}^{n-1} r_k^c + \epsilon.$$
The resulting lengths vector $r^1 = (r_1, r_2^c, \ldots, r_{n-1}^c, r_n)$
satisfies $- \epsilon_{I_p}(r^1) = \epsilon_{I_p^c}(r^1) >0$.
The zero section of $W_+$ is now the projective space $\C\P^{q-2}$ corresponding
to the vanishing of the coordinates $w_j$ for $j=2, \ldots, p$. It follows then
from \eqref{zw} that $w_j$ vanishes if and only if $a_1 b_j - a_j b_1 =0$.
Arguments similar to the ones above allow us to identify $\C\P^{q-2}$ with the
submanifold $M_{I_p} (r)\subset M(r)$. 
Again, for $r \in \Delta^1$ the submanifold $M_{I_p}(r)$ is non empty and is
diffeomorphic to the $(q-2)$-dimensional polygon space $M( \sum_{j=1}^p r_j,
r_2, \ldots, r_n).$

Note that on the wall crossing point $r^c,$ $\eps_{I_p}(r^c)=0$ and
$M_{I_p}(r^c)=M_{I_p^c}(r^c)$ is the singular point $[P^c]$ in $M(r^c).$
Moreover, note that as $r^0 \rightarrow r^c,$ we have $\epsilon_{I_p}(r^0)
\rightarrow 0$ and $ M_{I_p^c}(r^0)\subset M(r^0)$ collapses to $[P^c] \in
M(r^c).$ Similarly, as $r^1 \rightarrow r^c,$ we have $\epsilon_{I_p}(r^1)
\rightarrow 0$ and $ M_{I_p}(r^1)\subset M(r^1)$ collapses to $[P^c] \in
M(r^c).$
Roughly speaking, as $r^0 \rightarrow r^c,$  the ``width'' $\epsilon_{I_p}(r^0)$
of polygons in  $ M_{I_p^c}(r^0)\subset M(r^0)$  goes to zero, and the
$(p-2)$-dimensional submanifold  $M_{I_p^c}(r)$ collapses to a point when $r$
reaches the wall $W_{I_p}.$ 
Similarly, as $r$ leaves from the wall $W_{I_p}$ to the interior of $\Delta^{\!
1}$, the degenerate polygon $[P^c]$ gets inflated of an  $\epsilon_{I_p^c}(r^1)$
amount, and $M_{I_p}(r^1)$ is the $(q-2)$-dimensional submanifold that is born
as crossing the wall $W_{I_p}$.

The birational map between $M(r^0)$ and $M(r^1)$ is hence the composite of 
a blow up followed by a blow down, where the exceptional divisor is the
product of the flip loci. The maps $\beta_+$ and $\beta_-$ blow down the flip 
loci $M_{I_p^c}(r)$ and $M_{I_p}(r)$ to the singular point $[P^c]\in M(r^c)$, 
as in Figure \ref{scop}. Note that in Figure \ref{scop} there are no moment
polytopes, just schematic representations of the (eventually singular)
manifolds.

\begin{figure}[htbp]
\begin{center}
\psfrag{p}{\footnotesize{$\mathbb{C} \mathbb{P}^{p-2}$}}
\psfrag{q}{\footnotesize{$\mathbb{C} \mathbb{P}^{q-2}$}}
\psfrag{M_1}{\footnotesize{$M(r^1)$}}
\psfrag{M_0}{\footnotesize{$M(r^0)$}}
\psfrag{M_c}{\footnotesize{$M(r^c)$}}
\psfrag{M~}{\footnotesize{$\tilde{M}$}}
\psfrag{p_1}{\footnotesize{$p_0$}}
\psfrag{p_2}{\footnotesize{$p_1$}}
\psfrag{P}{\footnotesize{$P^c$}}
\psfrag{pi0}{\footnotesize{$\beta_-$}}
\psfrag{pi1}{\footnotesize{$\beta_+$}}
\psfrag{wall}{\footnotesize{wall}}
\psfrag{crossing}{\footnotesize{crossing}}
\psfrag{-sum}{\tiny{$-\sum r_i$}}
\psfrag{sum}{\tiny{$\sum r_i$}}
\includegraphics[width=9cm]{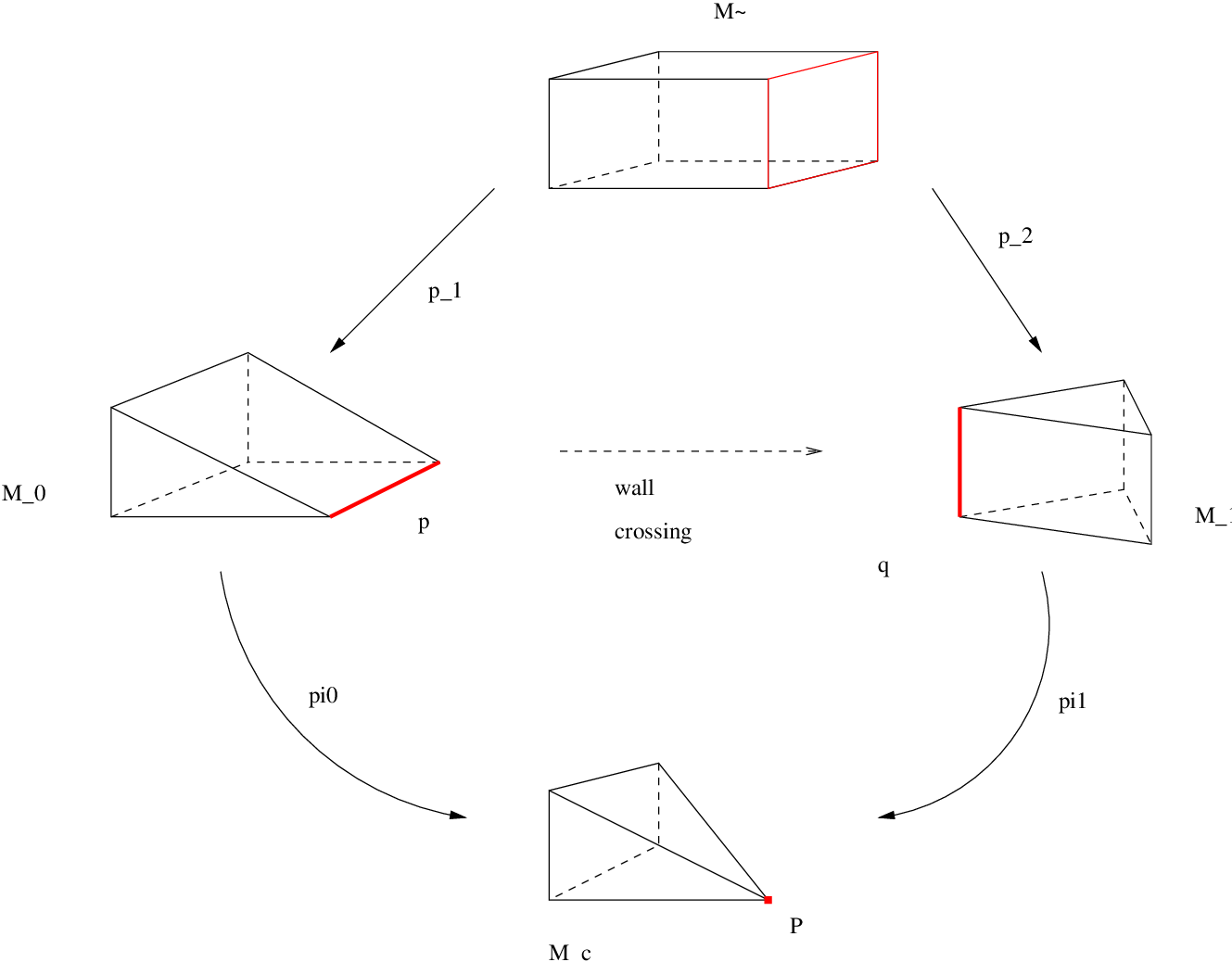}
\caption{Crossing a wall.}
\label{scop}
\end{center}
\end{figure}

\end{proof}

Note that the above wall-crossing analysis also holds for external walls. So in
particular, for any $ i=1, \ldots, n$, crossing the wall $W_{\{1, \ldots, n\}
\setminus \{i\}}$ replaces the empty set with $M(r) \simeq M_{\{i\}} (r) \simeq
\C\P^{n-3}$. Therefore we have the following immediate consequence of Theorem
\ref{WCtheo}
\begin{prop}\label{outer walls}
Let $r$ be in an external chamber $\Delta$ of $\Xi$. Then  $$M(r) \simeq
\C\P^{n-3}.$$
\end{prop}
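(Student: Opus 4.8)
The plan is to realize $M(r)$ through the local wall-crossing picture constructed in the proof of Theorem~\ref{WCtheo}, applied to the unique outer wall in $\overline{\Delta}$; as already noted, that analysis applies to outer walls as well. First I would invoke Remark~\ref{inner walls}: since $\Delta$ is external, and $M(r)\simeq M(\sigma(r))$ for every permutation $\sigma$ of the edges, after reordering I may assume that $\{n\}$ is maximal in $\mathcal S(\Delta)$, so that the outer wall in $\overline{\Delta}$ is $W_{\{n\}}$, of equation $\varepsilon_{\{n\}}(r)=0$, i.e.\ $r_n=\tfrac12$. I then run the analysis of Theorem~\ref{WCtheo} at $W_{I_p}$ with $I_p=\{1,\dots,n-1\}$, so $p=n-1$ and $q=n-p=1$. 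On $\Delta$ one has $\varepsilon_{I_p}(r)=\sum_{i<n}r_i-r_n=1-2r_n>0$, so $\Delta$ is the chamber $\Delta^{0}$ and $r=r^{0}$; across $W_{\{n\}}$ the length vectors $r^1$ satisfy $2r^1_n>1$, hence $r^1\notin\Xi=\mu_{U(1)^n}(Gr(2,n))$, so $\mu_{U(1)^n}^{-1}(r^1)=\emptyset$ and $M(r^1)$ is empty.

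Next I would read off the conclusion from the two blow-down maps of Theorem~\ref{WCtheo}. Since $\beta_+\colon M(r^1)\to M(r^c)$ restricts to a diffeomorphism of $M(r^1)\setminus M_{I_p}(r^1)$ onto $M(r^c)\setminus[P^c]$ and $M(r^1)$ is empty, the space $M(r^c)$ is the single point $[P^c]$. Consequently the complement of the exceptional locus in $M(r^0)$, being diffeomorphic via $\beta_-$ to $M(r^c)\setminus[P^c]$, is empty, so $M(r)=M(r^0)$ is diffeomorphic to the local model $W_-$ of \eqref{W-}. But with $q=1$ there are no $z$-coordinates in the construction of that model, so the fibre $\C^{q-1}=\C^{0}$ of $W_-\to\C\P^{p-2}$ is a point and $W_-\cong\C\P^{p-2}=\C\P^{n-3}$. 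Hence $M(r)\simeq\C\P^{n-3}$; equivalently, $M(r)$ is the submanifold $M_{I_p^c}(r)=M_{\{n\}}(r)\simeq\C\P^{p-2}$ of Theorem~\ref{WCtheo}, appearing here as the zero section \eqref{eq:CP2}, which is the precise meaning of the statement that crossing $W_{I_p}$ into $\Delta$ replaces the empty set by $M_{\{n\}}(r)\simeq\C\P^{n-3}$.

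The delicate point — and the one I would spell out carefully — is the passage from the local statements in the proof of Theorem~\ref{WCtheo} (that $W_-$ models a neighbourhood of the exceptional locus in $M(r^0)$, and that a neighbourhood of the singular point in $M(r^c)$ is a cone over $(S^{2p-3}\times S^{2q-3})/S^1$) to the global identifications $M(r^c)=\{[P^c]\}$ and $M(r)=W_-=\C\P^{n-3}$. Everything rests on the emptiness of $M(r^1)$: that is what forces the complement of $[P^c]$ in $M(r^c)$, hence the complement of the exceptional locus in $M(r)$, to vanish, so that the local models exhaust the whole spaces. The remaining verifications are routine: one checks that the single-wall-crossing hypotheses of Theorem~\ref{WCtheo} are met at the outer wall $W_{\{n\}}$ (here $r_n=\tfrac12\neq0$ and $\sum_{i<n}r_i=\tfrac12\neq0$, so after a further reordering inside $\{1,\dots,n-1\}$ one may also take $r_1\neq0$), which is precisely the content of the remark that the wall-crossing analysis extends to outer walls.
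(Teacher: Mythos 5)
Your proof is correct and follows essentially the same route as the paper, which obtains the proposition as an immediate consequence of Theorem~\ref{WCtheo} applied to an outer wall ($p=n-1$, $q=1$): the chamber on the far side lies outside $\Xi$, so the crossing replaces the empty set by the exceptional locus $M_{\{n\}}(r)\simeq\C\P^{n-3}$, which is all of $M(r)$. Your additional care in explaining why the emptiness of $M(r^1)$ forces $M(r^c)=\{[P^c]\}$ and hence $M(r^0)\cong W_-\cong\C\P^{n-3}$ only makes explicit what the paper leaves implicit.
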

In particular, Proposition \ref{outer walls} implies that 
for $n\geq5$ the Weyl group acts transitively on the external chambers of $\Xi$.
In fact, for $n\geq 5$, Farber, Hausmann and Sch\"utz \cite{fhs} have shown
that,
for $r$'s in different chambers of $\Xi$, the polygon spaces $M(r)$ are not 
diffeomorphic unless their chambers are related by the Weyl group action.

\subsection{Examples}\label{es wall-crossing}
Let $\Delta^{\!0} $ and $\Delta^{\!1}$ be the adjacent chambers in $\Xi\in \R^5$
as in
Example \ref{esvolume}.
Note that $\{3\}$ is a maximal short set in $\mathcal S(\Delta^{\!0})$ and
therefore $\Delta^{\!0}$ is an external chamber. By Corollary \ref{outer walls},
$M(r^0) \simeq \C\P^{2}$.

The closures of $\Delta^{\!0}$ and $\Delta^{\!1}$ intersect in the wall of
equation  $$\epsilon_{\{1,3\}} (r)= \epsilon_{\{ 2,4,5\}} (r) =0,$$ see Example
\ref{esvolume}. 
In particular, if one considers the lengths vectors $r^0 = \frac{1}{7}
\big( 1,1,3,1,1 \big) \in \Delta^{\!0}$ and $r^1= \frac{2}{11} \big(
\frac{1}{2},1,2,1,1 \big) \in \Delta^{\!1}$, then the segment $[r^0, r^1]$
 hits the wall at $r^c = \frac{1}{6} \big( \frac{2}{3},1,\frac{7}{3},1,1 \big)$.
By Theorem
\ref{WCtheo}, when crossing this wall the point $ M_{\{ 2,4,5\}} (r^0) \simeq
M(r_2^0+r_4^0+r_5^0, r_1^0, r_3^0)$ gets blown up. Therefore, for all $r^1 \in
\Delta^{\! 1},$ $M(r^1) $ is
diffeomorphic to $\C\P^2 $ blown up in one point with exceptional divisor
$ M_{\{1,3\}} (r^1) \simeq M(r^1_1 +r^1_3, r^1_2, r^1_4, r^1_5) \simeq \C\P^1$.
The maps $\beta_+$ and $ \beta_-$ blow down $ M_{\{ 2,4,5\}} (r^0)$ and $
M_{\{1,3\}} (r^1) $ to the critical point $[P^c]$ where the degenerate polygon
has edges
$$e_1^c = \Big(\frac{1}{9}, 0, 0\Big), \quad e_2^c = \Big(-\frac{1}{6}, 0, 0
\Big)= e_4^c =
e_5^c, \quad e_3^c = \Big(\frac{7}{18}, 0, 0 \Big).$$

\section{The Cohomology Ring of $M(r)$}\label{coho}

In this Section we study how the cohomology ring of $M(r)$ changes as $r$
crosses a wall in the moment polytope $\Xi.$ We apply the Duistermaat--Heckman
Theorem together with the volume formula (Theorem \ref{volume}) to describe
explicitly the cohomology ring $H^*(M(r)).$

The study of the cohomology ring structure of a reduced space $M/ \!\!/G$ has
been since the 80's one of the foremost  topics in equivariant symplectic
geometry. 
The problem is not closed though. In fact (even in the well-behaved case of a
compact connected Lie group $G$ acting on a compact manifold $M$) to give an
explicit description of the cohomology ring $H^*(M/ \!\!/G, \C)$ still some (non
trivial) work needs to be done. This was already pointed out by Guillemin and
Sternberg in \cite{gs95}, who observed that in ``nice'' situations (essentially
when the Chern class of the fibration $\mu^{-1}(\xi) \rightarrow M/ \!\!/G $
generates the cohomology ring), then a good deal of information on $H^*(M/
\!\!/G, \C)$ can be deduced from the Duistermaat--Heckman Theorem, if the
polynomial that describes the volume of the symplectic reduction is known. This
is the point of view we take in our analysis. 

\subsection{The cohomology ring of reduced spaces}\label{The cohomology ring of
reduced spaces}

In this Section we summarize the main ideas and theorems in \cite{gs95} using
the notation of moduli spaces of polygons. These arguments are valid in more
general settings, and  have been applied in \cite{gs95} to flag manifolds and
toric manifolds associated with a simplicial fan. For proofs and more details we
refer to \cite{gs95}. 
Let $r$ and $r^0$ be regular values of $\mu_{U(1)^n}$ lying in the same chamber
and denote by $(M(r), \omega_r)$ and $(M(r^0), \omega_{r^0})$
the associated symplectic quotients. Using this notation we now state the
Duistermaat-Heckman Theorem,  which relates the cohomology classes $[\omega_r]$
and $[\omega_{r^0}]$ of the symplectic reduced forms $\omega_r$ and
$\omega_{r^0}.$

\begin{theorem} \textnormal{(J.J.~Duistermaat, G.J.~Heckman, \cite{DH})}
\label{DH}As differentiable manifolds $M(r) = M(r^0),$ and 
$$[\omega_{r}]= [\omega_{r^0}]+ \sum_{i=1}^n (r_i-r_i^0) c_i $$
where $c_1, \ldots, c_n$ are the first Chern classes 
of the $n$ line bundles associated to the 
fibration  $\mu^{-1}(r) \rightarrow M(r).$
\end{theorem}

By definition of symplectic volume, we have:
$$\vol \,M(r)= \int_{M(r)} \exp([\omega_r]) = \int_{M(r^0)} \exp([\omega_{r^0}]
+  \sum_{i=1}^n (r_i-r_i^0) c_i).$$ Then $\vol \,M(r)$ is a polynomial (on each
chamber) of degree $n-3$ and

$$\frac{\d^{\a}}{\d r^{\a}} \vol \, M(r)_{|_{r^0}}= \frac{1}{k!} \int_{M(r^0)}
[\omega_{r^0}]^k  c_1^{\a_1} \cdots c_n^{\a_n} $$
for $\a$ multindex, $|\a| = \a_1+ \ldots +\a_n = n-3-k,$ with $0\leq k \leq
n-3.$

In particular, if $|\a|=n-3,$
\begin{equation}\label{mah}
\frac{\d^{\a}}{\d r^{\a}} \vol \, M(r)_{|_{r^0}}=  \int_{M(r^0)} c_1^{\a_1}
\cdots c_n^{\a_n}.
\end{equation}

If the $c_i$ generate the cohomology ring $H^*(M(r), \C),$ then Guillemin and
Sternberg observe that it is possible to read from \eqref{mah} the
multiplicative relations between the generators, concluding the following
explicit description of $H^*(M(r), \C)$.
\begin{theorem} \label{123} (\cite{gs95})
If $c_1, \ldots, c_n$ generate the cohomology ring  $H^*(M(r), \C),$ then 
$H^*(M(r), \C)$ is isomorphic to the abstract ring $$ \C[x_1, \ldots, x_n]/
\ann(\vol \, M(r))$$ where $Q(x_1, \ldots, x_n) \in \ann(\vol \, M(r))$ if and
only if $Q(\partial/\partial {r_1},\ldots ,\partial/\partial {r_n})\vol \,
M(r)=0$ and the isomorphism is given by $x_i \mapsto c_i$.
\end{theorem}


Therefore it is a central problem to determine when the $c_i$ generate the
cohomology ring $H^*(M(r)).$ When $M(r)$ is toric it is well known that this is
the case (see for example \cite{fulton}). Still, there are choices of $r$ for
which the polygon space $M(r)$ is not
toric, as it is the case, for example, for $r=(1,1,1,1,1)$, cf. \cite{hk2} and
Remark \ref{bending}.

In general let $\Delta$ be the set of regular values of $\mu$ in the convex
polytope $\Xi.$
The connected components $\Delta^{\! 1}, \ldots, \Delta^{\ell}$ of $\Delta$ are
themselves convex polytopes. Therefore, by the Duistermaat-Heckman Theorem, the
diffeotype of the reduced space $M(r)$ (thus also its cohomology ring) depends
only on the chamber $\Delta^{\! i}$ that contains $r.$
If the closure of $\Delta^{\! i}$ contains a vertex of $\Xi,$ then its
associated reduced space is a toric manifold and its associated cohomology ring
is generated by  the $c_i,$ \cite{gs95}. 

We prove that this holds for each regular value $r$ by applying the
wall-crossing analysis we did in Section \ref{crossing the walls}, showing that
crossing a wall has the effect of killing some relations, and so (roughly
speaking) some of the generators that were ``hidden'' appear.

\subsection{Wall-crossing and Cohomology}\label{wallcrossing and cohomology}

By Theorem \ref{WCtheo}, when $r$ crosses the wall $W_{I_p}$ the diffeotype of
the reduced manifold $M(r)$ changes by  replacing a copy of $\C\P^{p-2}$ in
$M(r)$ by a $\C\P^{q-2}$ by means of a blow-up followed by a blow-down.

In this Section we study how the cohomology ring $H^*(M(r))$ changes as $r$
crosses a wall; the main tools to prove our result are the Mayer--Vietoris
sequence and the Gysin sequence, together with the decomposition Theorem as
presented in \cite{bbd} and \cite{luca}.

Let $M$ and $M'$ be the moduli spaces of polygons respectively before and after
crossing the wall $W_{I_p}$. Moreover, denote by $V$ and $V'$ the tubular
neighborhoods in $M$ and $M'$ respectively of the submanifolds as in Theorem
\ref{WCtheo}: 
\begin{displaymath}
\begin{array}{c}
 V=N_{\varepsilon} \C\P^{p-2}= \textnormal{tubular neightborhood of}
\,\,\C\P^{p-2} \subset M\\
V'=N_{\varepsilon} \C\P^{q-2}=  \textnormal{tubular neightborhood of}
\,\,\C\P^{q-2} \subset M'\\
U= M \setminus \C\P^{p-2}\\
U'= M' \setminus \C\P^{q-2}.
\end{array}
\end{displaymath}
By the wall-crossing Theorem \ref{WCtheo}, $U=U'$ and $U\cap V = U'\cap V'=:
S_{\varepsilon}.$ The Mayer--Vietoris sequences for the manifolds $M$ and $M'$
are:
$$\ldots \rightarrow H^{k-1}(S_{\e}) \rightarrow H^k(M) \rightarrow H^k(U)
\oplus H^k(V) \rightarrow H^k(S_{\e}) \rightarrow \ldots $$
$$\ldots \rightarrow H^{k-1}(S_{\e}) \rightarrow H^k(M') \rightarrow H^k(U')
\oplus H^k(V') \rightarrow H^k(S_{\e}) \rightarrow \ldots $$

Because  $H^k(U)=H^k(U'),$ the change in the cohomology ring structures $H^*(M)$
and $H^*(M')$ is enclosed in how $H^k(V')$ and $H^k(V)$ map into $H^k(S_{\e}).$
These maps will be brought to light in the proof of the next proposition.
\begin{prop}
$$H^*(S_{\e})= H^*(\C\P^{\textnormal{min}(p,q)-2}) \otimes
H^*(S^{2\textnormal{max}(p,q)-3 })$$
\end{prop}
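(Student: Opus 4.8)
The plan is to identify $S_\varepsilon$ explicitly from the local model built in the proof of Theorem \ref{WCtheo} and then compute its cohomology directly. Recall that near the degenerate polygon $[P^c]$ the reduced space looks like the cone $C_W$ over $W = (S^{2p-3}\times S^{2q-3})/S^1$, and that $S_\varepsilon = U\cap V$ is, topologically, a sphere bundle around the exceptional locus: in the $W_-$ picture it is the unit sphere bundle of $S^{2p-3}\times_\pi \C^{q-1}$ over $\C\P^{p-2}$, i.e. $S_\varepsilon \cong (S^{2p-3}\times S^{2q-3})/S^1 = W$ itself. So the first step is to record that $S_\varepsilon\cong W$, and that $W$ carries two fibrations, $W\to\C\P^{p-2}$ with fibre $S^{2q-3}$ and $W\to\C\P^{q-2}$ with fibre $S^{2p-3}$, coming from the two Hopf fibrations $S^{2p-3}\to\C\P^{p-2}$ and $S^{2q-3}\to\C\P^{q-2}$.

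Next I would compute $H^*(W)$ using one of these fibrations, say (assuming $q\ge p$, so $\min=p$, $\max=q$) the bundle $\rho\colon W\to\C\P^{p-2}$ with fibre the sphere $S^{2q-3}$. Since $q\ge p$ we have $2q-3 \ge 2p-3 \ge \dim \C\P^{p-2} = 2p-4$ when $p\ge 1$; more to the point, the fibre dimension $2q-3$ exceeds the base dimension $2p-4$, so the Gysin/Leray--Serre spectral sequence of this sphere bundle degenerates for dimension reasons: the Euler class lives in $H^{2q-2}(\C\P^{p-2})$, which vanishes because $2q-2 > 2p-4$. Hence the Gysin sequence splits and gives $H^*(W)\cong H^*(\C\P^{p-2})\otimes H^*(S^{2q-3})$ as graded groups, which is precisely $H^*(\C\P^{\min(p,q)-2})\otimes H^*(S^{2\max(p,q)-3})$. (The symmetric case $p\ge q$ is handled by interchanging the roles of $p$ and $q$ and using the other fibration, giving the same answer with $\min$ and $\max$ swapped in the obvious way.)

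The main obstacle — and the only genuinely substantive point — is justifying that the Euler class of the sphere bundle vanishes, equivalently that the spectral sequence degenerates. The degree count above settles this cleanly as long as $p\ge 2$ and $q>p$; one should also check the degenerate edge cases ($p=2$, where $\C\P^{p-2}$ is a point and the statement is trivial, and $p=q$, where both fibrations have fibre dimension equal to one more than an odd-dimensional projective space dimension and one must argue the Euler class in $H^{2p-2}(\C\P^{p-2})\cong\Q$ still vanishes — here $2p-2 = (2p-4)+2 > 2p-4$ so it is again out of range and the bundle is cohomologically trivial). It is worth remarking that the statement is only asserted at the level of graded vector spaces (as the proof only needs the Betti numbers of $S_\varepsilon$ to run Mayer--Vietoris), so one does not need to pin down the ring structure of $H^*(S_\varepsilon)$; the tensor product decomposition of additive cohomology is all that is required and follows from the split Gysin sequence. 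Finally I would note that an alternative, more hands-on route is to observe directly that $W = (S^{2p-3}\times S^{2q-3})/S^1$ with the diagonal (anti-)action, and to use the fact that this is a $\C\P^{q-2}$-bundle... but the sphere-bundle Gysin argument is the shortest and I would present that.
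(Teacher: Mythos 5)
Your proposal is correct and follows essentially the same route as the paper: identify $S_{\e}$ as the sphere bundle with fibre $S^{2q-3}$ over $\C\P^{p-2}$ (the boundary of the tubular neighborhood, i.e.\ $W=(S^{2p-3}\times S^{2q-3})/S^1$) and run the Gysin sequence, which splits because the Euler class lies in $H^{2q-2}(\C\P^{p-2})=0$ for $q\geq p$. The paper reaches the same conclusion by tracking the Gysin sequence degree by degree rather than invoking the vanishing of the Euler class in one stroke, but the argument is the same.
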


\begin{proof}
By construction, $N_{\e}\C\P^{p-2}$ is the total space of a disk fibration over
$\C\P^{p-2},$ and $S_{\e}$ is the total space of the associated fibration in
spheres:

$$\xymatrix{ N_{\e}\C\P^{p-2} \ar[d]^{D^{2q-2} \quad \Rightarrow}  & S_{\e}
\ar[d]^{S^{2q-3}}\\
\C\P^{p-2} & \C\P^{p-2}\\}$$
The sphere fibration 
$ \xymatrix@1{\pi:S_{\e} \ar[r]^-{S^{2q-3}} &\C\P^{p-2 }}$ 
induces the following Gysin sequence
$$  \xymatrix{{} \ar[r]& H^k(\C\P^{p-2}) \ar[r]^-{\pi^*} &H^k(S_{\e})
\ar[r]^-{\pi_*} &  H^{k-(2q-3)}(\C\P^{p-2}) \ar[r]^-{\wedge e}& 
H^{k+1}(\C\P^{p-2}) \ar[r] & {}    } $$
where $\pi^*$ is the map induced in cohomology by the projection map $\pi,$
$\pi_*$ is the integration along the fibers and $\wedge e$ is the wedge product
with the Euler class.

Recall that
$$H^k(\C\P^{p-2})= \left\{ 
\begin{array}{ll}
\C & \textnormal{if} \,\,k= 0,2, \ldots, 2(p-2)\\
0& \textnormal{otherwise}.
\end{array}\right. $$
Suppose that $q \geq p.$ Then the first part of the Gysin map is
$$\xymatrix{ \C \ar[r]^-{\pi^*} & H^0(S_{\e}) \ar[r]^-{\pi_*} & 0
\ar[r]^-{\wedge e}& 0 \ar[r]^-{\pi^*} & H^1(S_{\e}) \ar[r]^-{\pi_*} & 0
\ar[r]^-{\wedge e}& \C \ar[r]^-{\pi^*} & {}\\
{} \ar[r]^-{\pi^*} & H^2(S_{\e}) \ar[r]^-{\pi_*} & 0 \ar[r]^-{\wedge e} & 0
\ar[r]^-{\pi^*} & H^3(S_{\e})\ar[r]^-{\pi_*} & 0 \ar[r]& \ldots & {}}$$
until cohomology groups of degree $k=2p-2$ (in fact  $H^{k-(2q-3)}(\C\P^{p-2})
\simeq 0 $ for all  $0 \leq k \leq 2q-3$). Therefore
\begin{equation}\label{22-1}
H^k(S_{\e}) \simeq H^{k} (\C\P^{p-2}) \quad \forall \,\,0 \leq k \leq 2(p-2). 
\end{equation}

At $k= 2q-3$ the Gysin sequence goes as follows:
$$\xymatrix{ 0 \ar[r]^-{\pi^*} & H^{2q-3} (S_{\e}) \ar[r]^-{\pi_*} & \C
\ar[r]^-{\wedge e} & 0 \ar[r]^-{\pi^*}& H^{2q-3} (S_{\e}) \ar[r]^-{\pi_*} & 0
\ar[r]^-{\wedge e} & {}\\
& {} \ar[r]^-{\wedge e} & 0  \ar[r]^-{\pi^*}& H^{2q-1} (S_{\e}) \ar[r]^-{\pi_*}
& \C \ar[r]^-{\wedge e}&0 \ar[r]& \ldots}$$
To check this second part the only thing to keep in mind is that
$H^{k}(\C\P^{p-2}) \simeq 0$ for all $k \, \geq 2q-3. $ In fact $k \geq 2q-3
\geq 2p-3 > 2(p-2)$. Observing that $k-2q+3=2(p-2) \iff k = 2(p+q)-7,$ 
\begin{equation}\label{22-2}H^k(S_{\e}) \simeq H^{k-(2q-3)} (\C\P^{p-2}) \quad
\forall \,\,2q-3 \leq k \leq 2n-7. \end{equation}
\begin{equation}\label{22-3}H^k(S_{\e}) \simeq 0 \quad \forall \, k : \, 2(p-2)
< k <2q-3 \quad \text{or} \quad k \geq 2n-6. \end{equation}
Summarizing, from \eqref{22-1}, \eqref{22-2} and \eqref{22-3} we get
$$H^*(S_{\e})= H^*(\C\P^{p-2}) \otimes H^*(S^{2q-3 }).$$
It is easy to check that if we assume $p \geq q$ then $p$ and $q$ exchange their
roles, and the result follows.
\end{proof}

Note that $H^*(V)=H^*(\C\P^{p-2})$ because $V$ retracts on $\C\P^{p-2}.$
Similarly, $H^*(V')=H^*(\C\P^{q-2}),$ and we have all the ingredients to write
the Mayer--Vietoris sequences for $M$ and $M':$  
$$ H^0(M) \rightarrow H^0(U) \oplus \C \rightarrow \C \rightarrow H^1(M)
\rightarrow H^1(U) \oplus 0 \rightarrow 0 \rightarrow$$ $$\rightarrow H^2(M)
\rightarrow H^2(U)\oplus \C \rightarrow \C \rightarrow$$

and
$$ H^0(M') \rightarrow H^0(U') \oplus \C \rightarrow \C \rightarrow H^1(M')
\rightarrow H^1(U') \oplus 0 \rightarrow 0 \rightarrow$$ $$\rightarrow H^2(M')
\rightarrow H^2(U')\oplus \C \rightarrow \C \rightarrow$$
Assume again $q \geq p.$ So, until degree $2(p-2),$ the two sequences above are
the same, thus $$H^k(M)= H^k(M') \quad \forall \,0 \leq k \leq 2(p-2).$$
At $2(p-2)+1$ the Mayer--Vietoris sequences of the manifolds of $M$ and $M'$
are:
$$ \rightarrow H^{2p-3}(M) \rightarrow H^{2p-3}(U) \oplus 0 \rightarrow 0
\rightarrow H^{2p-2}(M) \rightarrow H^{2p-2}(U) \oplus 0 \rightarrow 0
\rightarrow$$
$$ \rightarrow H^{2p-3}(M') \rightarrow H^{2p-3}(U') \oplus 0 \rightarrow 0
\rightarrow H^{2p-2}(M') \rightarrow H^{2p-2}(U') \oplus \C \rightarrow 0
\rightarrow$$
and, until degree $2q-3$ the two sequences differ by the fact that $H^{k}(V')=
\C$ while $H^k(V) \simeq 0$ for $k$ even, $2p-3 \leq k \leq 2q-3$. 
Thus $$ \textnormal{dim}(H^k(M'))= \textnormal{dim}(H^k(M)) +1 \quad
\textnormal{if} \, k \, \textnormal{even}, \, 2p-3 \leq k \leq 2q-3  $$
and
$$ H^k(M')= H^k(M)=0 \quad \textnormal{for} \, k \, \textnormal{odd}.$$
At $2q-3$ the Mayer--Vietoris sequences for $M$ and $M'$ are 
$$ \rightarrow H^{2q-3}(M) \rightarrow H^{2q-3}(U) \oplus 0 \rightarrow 0
\rightarrow H^{2q-2}(M) \rightarrow H^{2q-2}(U) \oplus 0 \rightarrow 0
\rightarrow$$
$$ \rightarrow H^{2q-3}(M') \rightarrow H^{2q-3}(U') \oplus 0 \rightarrow 0
\rightarrow H^{2q-2}(M') \rightarrow H^{2q-2}(U') \oplus 0 \rightarrow 0
\rightarrow$$
and so again (just as for $0 \leq k \leq 2(p-2)$) $$H^k(M') \simeq H^k(M) \quad
k \geq 2q-3. $$

If $p \geq q $ similar arguments hold, therefore we have proved the following:
\begin{prop}
$$ \begin{array}{ll} H^k(M')= H^k(M) =0 & \textnormal{if} \, \,k \,\,
\textnormal{is odd};\\
 H^k(M')= H^k(M) & \left\{ \begin{array}{l}
0 \leq k \leq 2(\textnormal{min}(p,q)-2),\\
k \geq 2 \textnormal{max}(p,q)-3;
\end{array} \right.\\
\textnormal{dim} H^k(M')= \textnormal{dim} H^k(M) +1 &  k \, \textnormal{even},
2p-2 \leq k \leq 2q-4 \,(\textnormal{case} \, \,q \geq p );\\
\textnormal{dim} H^k(M)= \textnormal{dim} H^k(M') +1 &  k \, \textnormal{even},
2q-2 \leq k \leq 2p-4 \,(\textnormal{case} \, \,p \geq q );\\
\end{array}$$
\end{prop}

This calculation, done using the Mayer--Vietoris sequences of the ma\-nifolds
$M$ and $M',$ tells us in which degree the cohomology groups of the symplectic
quotient $M(r)$ change as $r$ crosses a wall $W_{I_p}$.

Even though it is natural- by the construction- to expect that the new born
cohomological classes are polynomial in the class of the blown up manifold
$\C\P^{q-2}$ or $\C\P^{p-2}$, this calculation does not give us such precise
informations. We use the decomposition Theorem due to
Beilinson--Bernstein--Deligne \cite{bbd} to identify precisely the new born
classes that increase the dimension of the cohomology groups of ``middle''
degrees. To this aim, some notation needs to be introduced.

Let $f: X \rightarrow Y$ be a map of algebraic manifolds (i.e. manifolds which
are the set of common zeros of a finite number of polynomials). For each $0<k<
\frac{\textrm{dim} X}{2},$ define  
$Y_k:= \{ y \in Y : \textnormal{dim}(f^{-1}(y))\geq k \}.$
The map $f$ is \emph{small} if and only if
\begin{equation}\label{21} \textnormal{dim}Y_k +2k < \textnormal{dim}X \quad
\forall \,\, 0<k< \frac{\textrm{dim} X}{2}; \end{equation}
and \emph{semi-small} if and only if 
\begin{equation}\label{22} \textnormal{dim}Y_k +2k \leq \textnormal{dim}X \quad
\forall \,\, 0<k< \frac{\textrm{dim} X}{2}. \end{equation}

\begin{prop}
At least one of the blow down maps $\beta_+$ and $\beta_-$ as in Theorem
\ref{WCtheo} is small.
\end{prop}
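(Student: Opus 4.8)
The plan is to extract the fibre structure of $\beta_-$ and $\beta_+$ from Theorem \ref{WCtheo} and then verify the defining inequality \eqref{21} by a one-line count. The three spaces $M(r^0)$, $M(r^1)$, $M(r^c)$ are GIT quotients of the projective variety $Gr(2,n)$ by $U(1)^n$ at the levels $r^0$, $r^1$, $r^c$, hence projective varieties of complex dimension $n-3$ ($M(r^c)$ singular only at $[P^c]$), and $\beta_-\colon M(r^0)\to M(r^c)$ and $\beta_+\colon M(r^1)\to M(r^c)$ are the proper birational morphisms produced by the variation of GIT. So both are maps of algebraic varieties of dimension $n-3$, and the notion of smallness applies.

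First I would record the fibres. By Theorem \ref{WCtheo} the map $\beta_-$ restricts to an isomorphism over $M(r^c)\setminus\{[P^c]\}$ and $\beta_-^{-1}([P^c])=M_{I_p^c}(r^0)\simeq\C\P^{p-2}$; symmetrically $\beta_+$ is an isomorphism over $M(r^c)\setminus\{[P^c]\}$ with $\beta_+^{-1}([P^c])=M_{I_p}(r^1)\simeq\C\P^{q-2}$. Hence, in the notation preceding \eqref{21}, for $\beta_-$ one has $Y_k=\{[P^c]\}$ whenever $1\le k\le p-2$ and $Y_k=\varnothing$ otherwise, and likewise for $\beta_+$ with $p-2$ replaced by $q-2$. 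In particular every nonempty $Y_k$ is a single point, so $\dim Y_k=0$.

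Feeding this into \eqref{21}, $\beta_-$ is small exactly when $\dim Y_k+2k=2k<n-3$ holds for all $1\le k\le p-2$, i.e.\ exactly when $2(p-2)<n-3$; since $n=p+q$ this is equivalent to $p\le q$ (and it automatically places every $k$ with $Y_k\neq\varnothing$ in the range $0<k<\tfrac{1}{2}\dim M(r^0)$ demanded by the definition). Running the same computation with the roles of $p$ and $q$ exchanged, $\beta_+$ is small exactly when $q\le p$. As one of $p\le q$, $q\le p$ always holds, at least one of $\beta_-$, $\beta_+$ is small (indeed both when $p=q$), which is the assertion; for an external wall one side is empty and the corresponding contraction is vacuously small. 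I do not anticipate a genuine difficulty here: everything beyond the fibre description already contained in Theorem \ref{WCtheo} is the displayed arithmetic, and the only point to watch is to read ``$\dim$'' as complex dimension uniformly, so that the generic fibre being a point renders the smallness condition vacuous away from $[P^c]$.
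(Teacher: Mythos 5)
Your argument is correct and is essentially the paper's own proof: both identify the fibres $\beta_\pm^{-1}([P^c])$ as $\C\P^{p-2}$ and $\C\P^{q-2}$ using Theorem \ref{WCtheo}, note that the only nonempty $Y_k$ is the single point $[P^c]$, and verify inequality \eqref{21}, concluding that the map contracting the lower-dimensional projective space is small. The only (harmless) discrepancy is one of labels: you attach $\C\P^{p-2}$ to $\beta_-$, consistently with the statement and construction in Theorem \ref{WCtheo}, whereas the paper's proof of this proposition performs the same dimension count with the roles of $\beta_+$ and $\beta_-$ interchanged (and mixes real and complex dimension conventions), which does not affect the disjunction being proved.
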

\begin{proof} 
Denote by $ Y_k^{\pm}:= \{ y \in M(r^c) :
\textnormal{dim}(\beta_{\pm}^{-1}(y))\geq k \}.$
If $q > p,$ then $\beta_+$ is small. In fact 
\begin{displaymath}
Y_k^+= \left\{\begin{array}{ll}
\{[P^c]\} & \textnormal{for } 1 \leq k \leq 2(p-2)\\
\emptyset & \textnormal{otherwise}.
\end{array} \right.
\end{displaymath}
Thus inequality \eqref{21} for $k= 2(p-2)$ is verified: $$ 4(p-2) < 2(n-3) \iff
4p -8 < 2p+2q-6 \iff p-1 < q,$$ and similarly inequality \eqref{21} is verified
for smaller $k$'s.
Under the assumption $q >p$ the map $\beta_-$ is not semi-small (thus even not
small). In fact $Y_k^-= \{[P^c] \}$ and  inequality \eqref{22} for $k=q-2$ does
not hold since $$4(q-2) \leq 2(n-3 ) \iff q-1 \leq p.$$ 

If $ p > q,$ then $\beta_-$ is small and $\beta_+$ is not semi-small. 
Note that if $p=q$ then both $\beta_-$ and $\beta_+$ are small.
\end{proof}

Assume that $\beta_+: M \rightarrow M(r^c)$ is small. Then $ H^*(M) = I
H^*(M(r^c)),$ where  $IH^*(M(r^c))$ is the intersection cohomology  of the
singular manifold $M(r^c), $ (see the survey paper by M.~de Cataldo and
L.~Migliorini  \cite{luca2}).

We state the decomposition Theorem just for the special situation of $\beta_+$
and $\beta_-$ resolutions of the singularity corresponding to the lined polygon
in $M(r^c).$
For the statement in full generality, proofs and more details we refer to the
original paper \cite{bbd}, and to \cite{luca} by  de~Cataldo--Migliorini, where
an alternative proof is given.

In our setting, the decomposition Theorem says that $H^*(M')$  is isomorphic to
the intersection cohomology  $IH^*(M(r^c))$ of $M(r^c)$ plus polynomials in the
cohomological classes of submanifolds $\mathcal C_i$ of $M.$ In the moduli space
si\-tuation, these submanifolds are just the preimages of the points $y_i \in
Y_k^+$.

If we assume $q \geq p$ (which is equivalent to assuming $\beta_+$ small), then
$ C:= (\beta_-)^{-1} ([P^c])$ is the resolution in $M'$ of the singularity
$[P^c].$ By Theorem \ref{WCtheo}, $$\mathcal C = M_{I_p}(r) \simeq \C\P^{q-2}.$$

Applying the decomposition Theorem we get:
\begin{theorem}\label{decomposition}
Let $\beta_+ : M \rightarrow M(r^c)$ be a small resolution of the singularity in
$ M(r^c)$ and let $M'$ be the polygon space birational to $M$ via the single
wall-crossing described above. Then
$$H^*(M') = H^*(M)\oplus \bigoplus_{\a=0}^{q-p} \C \Big( PD([M_{I_p}(r)]) \smile
c_1^{\a} (\mathcal{N'}) \Big)$$
where  $PD([M_{I_p}(r)]) \in H^{2p-2}(M')$ is the Poincar\'e dual of $ 
M_{I_p}(r) \subset M',$ and   $c_1(\mathcal{N'}) $ is the first Chern class of
the normal bundle  $\mathcal{N'} $ to $ M_{I_p}(r) \subseteq M'.$
\end{theorem}

At the light of this result, to prove that $H^*(M(r))$ is generated by the Chern
classes $c_i$ we need to express the classes $PD([M_{I_p}(r)])$ and the cup
products $PD([M_{I_p}(r)]) \smile c_1^{\a} (\mathcal{N'} )$ as combinations of
the $c_i.$

By Poincar\'e duality, $PD([M_{I_p}(r)]) \in H^{2p-2}(M')$ and thus we want to
show that, for some constants $A_{\a},$
$$ PD([M_{I_p}(r)])= \sum_{\sum \a_i= p-1} A_{\a} c_1^{\a_1} \cdots
c_n^{\a_n}.$$

To this aim, we will explicitly describe the classes $c_i$ by means of the two
description of the polygon space $M(r)$ that one gets from the $U(1)^n \times
U(2)$-action on $\C^{n \times 2}$ by performing reduction in stages as
summarized in diagram \ref{beauty}.

Let $r$ be a regular value in $\Delta^{\! 1}$ such that the reduced manifold
$M(r)\simeq M'$. 
Since the fibration $\mu^{-1}(r) \rightarrow M(r)$ as in \eqref{beauty} is
trivial,  the classes $c_i$ of the $n$-complex line bundles associated to it 
are actually the classes  $c_i$
relative to the $n$ Hopf fibrations $ S^3_{\sqrt{2 r_i}}  \rightarrow 
S^2_{r_i}.$  These are well known to be the Chern classes of the tautological
line bundle $\mathcal O(-1)$ over $\C\P^1$ (under the identification $\C \P^1
\simeq S^2$).
More precisely
\begin{equation}\label{-}
 s^* c_i = c_1 (p_i^*(\mathcal O(-1))) = - [p_i^* \omega_{FS}]
\end{equation} where $s$ is
the fibration $s: \mu_{SO(3)}^{-1}(0) \rightarrow M(r)$, $p_i$ is the projection
$p_i: \prod_j S^2_{r_j} \rightarrow S^2_{r_i}$ and $\omega_{FS}$ is the
Fubini--Study symplectic form.

\begin{prop}\label{MIP}
The Poincar\'e dual of $M_{I_p} (r)$ is the $(2p-2)-$class 
$$PD ([M_{I_p} (r)])= (-1)^{p-1} \prod_{j=2, \ldots, p} (c_{i_j} + c_{i_1}) \in 
H^{2p-2}(M')$$
where $I_p = \{i_j \mid j=1, \ldots, p \}$. Moreover  first Chern class
$c_1(\mathcal N')$ of the normal bundle to $M_{I_p}(r)$ is
$$ c_1 (\mathcal N' ) =-2( c_{i_2} + \cdots + c_{i_p}).$$
\end{prop}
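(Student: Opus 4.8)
The plan is to exhibit $M_{I_p}(r)$ as a transverse intersection of $p-1$ complex hypersurfaces in $M'=M(r)$, to compute the Poincar\'e dual of each factor and multiply, and to read the normal bundle off the same picture by the splitting principle for transverse intersections. After permuting the edges (harmless, since $M(r)\simeq M(\sigma(r))$) I may assume $I_p=\{1,\dots,p\}$, so $i_1=1$ and $i_j=j$. For $j=2,\dots,p$ set
$$D_j:=\{\,[P]\in M'\ \mid\ e_j\ \text{is positively proportional to}\ e_1\,\}=M_{\{1,j\}}(r).$$
This is $M(r_1+r_j,r_2,\dots,\widehat{r_j},\dots,r_n)$, an $(n-1)$-gon space, hence a closed complex hypersurface of $M'$ once $r$ is generic in its chamber (which I may assume, the diffeotype being locally constant there); it is nonempty because it contains $M_{I_p}(r)$, which is nonempty by Theorem \ref{WCtheo}. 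By definition $M_{I_p}(r)=D_2\cap\dots\cap D_p$.

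The crux is transversality of this intersection, and I would get it from the local normal form in the proof of Theorem \ref{WCtheo}: a neighbourhood of $M_{I_p}(r)$ in $M'$ is identified with a neighbourhood of the zero section of the bundle $W_+$ of \eqref{W+}, in coordinates $(w_2,\dots,w_p,z_{p+1},\dots,z_{n-1})$ for which $M_{I_p}(r)=\{w=0\}$ and, by \eqref{zw}, $D_j=\{w_j=0\}$. Thus the $D_j$ are coordinate hyperplanes in this chart and meet transversally along $M_{I_p}(r)$; hence in $H^{*}(M')$ one has $PD([M_{I_p}(r)])=\prod_{j=2}^{p}PD([D_j])$, and the normal bundle splits, $\mathcal N'\cong\bigoplus_{j=2}^{p}\mathcal N_{D_j/M'}\big|_{M_{I_p}(r)}$.

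To identify $PD([D_j])\in H^{2}(M')$ I would pull back along $s\colon\mu_{SO(3)}^{-1}(0)\to M'$ from diagram \eqref{beauty}: with $\widetilde D_j:=s^{-1}(D_j)$ one has $s^{*}PD([D_j])=PD([\widetilde D_j])$, and $\widetilde D_j=g_j^{-1}(\Delta)$ where $g_j=(p_1,p_j)\colon\mu_{SO(3)}^{-1}(0)\to S^2_{r_1}\times S^2_{r_j}$ and $\Delta$ is the ``equal direction'' diagonal. The map $g_j$ is transverse to $\Delta$, because non-degeneracy of the polygons in $M'$ lets one tilt $e_j$ relative to $e_1$ inside the level set, compensating the closing condition with the remaining edges. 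Identifying $S^2_{r_i}\simeq\C\P^{1}$ and using $PD(\Delta)=h_1+h_2$ (with $h_1,h_2$ the point classes of the two factors) together with the formula $s^{*}c_i=-[p_i^{*}\omega_{FS}]$ recalled above, we get $PD([\widetilde D_j])=-s^{*}(c_1+c_j)$. Since $\mu_{SO(3)}^{-1}(0)\to M'$ is a principal $SO(3)$-bundle, hence rationally an $S^3$-fibration, its Gysin sequence shows $s^{*}$ is injective on $H^{2}(M';\Q)$, so $PD([D_j])=-(c_1+c_j)=-(c_{i_1}+c_{i_j})$. Multiplying gives $PD([M_{I_p}(r)])=(-1)^{p-1}\prod_{j=2}^{p}(c_{i_j}+c_{i_1})$.

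For the normal bundle, write $\iota\colon M_{I_p}(r)\hookrightarrow M'$ and $\iota_{D_j}\colon D_j\hookrightarrow M'$. By the splitting above $c_1(\mathcal N')=\sum_{j=2}^{p}\iota^{*}c_1(\mathcal N_{D_j/M'})$, and by the self-intersection formula $c_1(\mathcal N_{D_j/M'})=\iota_{D_j}^{*}PD([D_j])=-\iota_{D_j}^{*}(c_1+c_j)$; hence $c_1(\mathcal N')=-\iota^{*}\big((p-1)c_1+\sum_{j=2}^{p}c_j\big)$. Now on $M_{I_p}(r)$ the edges $e_1,\dots,e_p$ all point the same way, so $p_1,\dots,p_p$ agree on $\widetilde M_{I_p}(r):=s^{-1}(M_{I_p}(r))$ as maps to $\C\P^{1}$; therefore $(s^{*}c_i)|_{\widetilde M_{I_p}(r)}$ is independent of $i\in I_p$, and Gysin injectivity on $H^{2}(M_{I_p}(r);\Q)$ gives $\iota^{*}c_1=\dots=\iota^{*}c_p$, whence $(p-1)\iota^{*}c_1=\sum_{j=2}^{p}\iota^{*}c_j$. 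Thus $c_1(\mathcal N')=\iota^{*}\big(-2(c_2+\dots+c_p)\big)=\iota^{*}\big(-2(c_{i_2}+\dots+c_{i_p})\big)$, which is the asserted formula (the statement is to be read with $c_1(\mathcal N')$ represented by this combination of the $c_i$, which is all that Theorem \ref{decomposition} uses). The one genuinely delicate step is the transversality invoked above: one needs the $D_j$ to meet transversally, not merely in the expected dimension, so that the product of their classes really equals $PD([M_{I_p}(r)])$ and the normal bundle splits, and the explicit coordinates furnished by Theorem \ref{WCtheo} are exactly what make this immediate.
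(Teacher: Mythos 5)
Your proof is correct and follows the same skeleton as the paper's: both write $M_{I_p}(r)=\bigcap_{j=2}^{p}M_{\{i_1,i_j\}}(r)$, identify each codimension-one piece with the class $-(c_{i_1}+c_{i_j})$, and multiply. The differences are in how the two supporting facts are handled. For the class of $M_{\{i,j\}}(r)$ the paper simply quotes Kamiyama--Tezuka and Konno ($PD$ of $\tfrac{z_i+z_j}{2}=-(c_i+c_j)$ is the locus $e_i=e_j$), whereas you prove it from scratch by pulling back along $s$ in diagram \eqref{beauty}, using $PD(\Delta)=h_1+h_2$ on $\C\P^1\times\C\P^1$ and rational injectivity of $s^*$ on $H^2$ from the Gysin sequence of the $SO(3)$-bundle; this makes the step self-contained. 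You also make explicit the transversality of the $D_j$ (via the coordinates $(w,z)$ of \eqref{zw} and the local model \eqref{W+}, in which the $D_j$ are coordinate hyperplanes), a point the paper passes over in silence when it multiplies the classes --- this is a genuine improvement in rigor, with the caveat that a single chart based on the minor of rows $1$ and $n$ need not cover all of $M_{I_p}(r)$, so one should note that transversality is local and an analogous chart exists at every point. For the normal bundle the routes genuinely diverge: the paper restricts the tangent bundles $TM(r)\cong\bigoplus_i p_i^*\mathcal O(2)$ and $TM_{I_p}(r)$ and reads off $c_1$ of the quotient, while you use the splitting $\mathcal N'\cong\bigoplus_j\mathcal N_{D_j/M'}|_{M_{I_p}(r)}$ plus the self-intersection formula $c_1(\mathcal N_{D_j/M'})=\iota_{D_j}^*PD([D_j])$; both reduce to the same key identity $\iota^*c_{i_1}=\cdots=\iota^*c_{i_p}$ on $M_{I_p}(r)$, which you justify and the paper leaves implicit. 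No gaps; your version buys a self-contained argument at the cost of a slightly longer chain of standard facts.
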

\begin{proof}
The polygon space $M(r)$ can also be described as the GIT quotient of $\prod_n
\C\P^1$ by the diagonal action of $PSL(2, \C).$ Let $H \rightarrow \C\P^1$ be
the hyperplane bundle. Then the line bundle $p_i^* H \otimes p_j^* H $ over
$\prod_n \C\P^1$ induces a line bundle $L_{ij}$ on the quotient $M(r)$, cf
\cite{konno}. 
For all $i=1, \ldots,n $, let $z_i$ be the first Chern class
$$ z_i := c_1 (L_{ii}) \in H^2(M(r), \mathbb Z).$$ 
Clearly $s^* z_i = c_1 (p_i^* \mathcal O(2)) = 2 [p_i^* \omega_{FS}].$ It
follows that 
$$ z_i = -2c_i.$$
In the equilateral case Kamiyama and Tezuka \cite{kamiyama} prove that the
Poincar\'e dual of $\frac{z_i + z_j}{2}$ is the submanifold of $M(1, \ldots, 1)$
consisting of those polygons $\vec{e}$ such that $ e_i=e_j.$ This easily
generalizes to the non-equilateral case (cf \cite{konno}), and the Poincar\'e
dual of $\frac{z_i + z_j}{2} = - (c_i +c_j)$ is the submanifold $M_{\{i,j \}}
(r)$.
Since $$ M_{I_p} (r)= \bigcap_{i_j \in I_p \setminus \{ i_1\}} M_{\{i_1,i_j \}}
(r) $$ the result follows.

Analogously one can prove that the first Chern class $c_1(\mathcal N')$ of the
normal bundle to $M_{I_p}(r)$ is a linear combination of the Chern classes $c_i,
i= 1, \ldots, n.$ In fact the tangent bundle to $M(r)$ is the direct sum  $p_1^*
\mathcal O(2) \oplus \cdots \oplus p_n^* \mathcal O(2)$ of the pullbacks of the
tangent bundle to each sphere. 

The submanifold $M_{I_p}(r)$ is the moduli space of polygons obtained as the
quotient by the $SO(3)$-action on the product of spheres of radii $(\sum_{i \in
I_p} r_i,r_{i_{p+1}}, \ldots, r_{i_n}),$ with $I_p=\{i_1, \ldots, i_p\}$. Since
the inclusion $S^2_{\sum_{i \in I_p} r_i} \hookrightarrow S^2_{r_{i_1}} \times
\ldots \times S^2_{r_{i_p}}$ is the diagonal one, then the tangent bundle to
$M_{I_p}(r)$ is  $$(p_{i_1}^* \mathcal O(2)\otimes \cdots \otimes 
p_{i_p}^*\mathcal O(2)) \oplus p_{i_{p+1}}^* \mathcal O(2)\oplus \cdots \oplus
p_{i_n}^* \mathcal O(2)$$
and therefore the first Chern class of the quotient $T M(r) / T M_{I_p}(r)$ is
the sum of $(p-1)$ among the first Chern classes $c_i$ for $i \in I_p,$ i.e. 
\begin{equation}\label{eq:normal}
c_1 (\mathcal N' ) =-2( c_{i_2} + \cdots + c_{i_p}). 
\end{equation}
\end{proof}

Note that similar arguments hold if $p >q$, i.e. if $\beta_+$ is not small while
$\beta_-$ is. In this case the decomposition Theorem implies that the cohomology
of $M$ is described as follows
$$ H^*(M) = H^*(M')\oplus  \bigoplus_{\a=0}^{p-q} \C \Big( PD([ M_{I_q}(r)])
\smile c_1^{\a} (\mathcal{N}) \Big)$$
where  $ PD([M_{I_q}(r)]) \in H^{2q-2}(M)$ is the class of $ M_{I_q}(r) \subset
M,$ and  $c_1(\mathcal{N}) $ is the first Chern class of the normal bundle
$\mathcal N$ to $ M_{I_q}(r) \subseteq M.$ 
Moreover $PD([M_{I_q}(r)]) \in H^{2q-2}(M)$ and $c_1(\mathcal{N}) $ are clearly
combinations of the $c_i$'s, since Proposition \ref{MIP} follows from diagram
\eqref{beauty} (which holds for any smooth polygon space, and in particular for
any $r$ such that $M(r) \simeq M$).
Thus, by Theorem \ref{123}, the following holds:

\begin{theorem}\label{fine}
For $r$ generic, the cohomology ring $H^*(M(r), \C)$ of the moduli space of
polygons $M(r)$ is generated by the first Chern classes $c_1,
\ldots, c_n$ of the $n$ complex line bundles associated to the fibration
$\mu^{-1}(r_1, \ldots, r_n) \rightarrow M(r).$ So $$ H^*(M(r), \C) \simeq
\C[x_1, \ldots, x_n]/ \textrm{Ann}(\vol \, M(r))$$ where a polynomial $Q(x_1,
\ldots, x_n) \in \ann(\vol \, M(r)) $  if and only if $$Q \Big( \frac{\d}{\d
r_{1}}, \ldots, \frac{\d}{\d r_{n}} \Big) \vol \, M(r)=0$$
and, as in Theorem \ref{123}, the isomorphism is given by $x_i \mapsto c_i$.   
\end{theorem} 
Note that the formula \eqref{mah} determines not only the cohomology ring 
of the polygon space $M(r)$ but also its intersection numbers. Explicit formulas
for these have been obtained by Agapito and Godinho \cite{ag} via a recursion
relation in $n$, by Takakura \cite{ta01} using ``quantization commutes with
reduction'' and by Konno \cite{konno} using algebro-geometric methods. 
For example, consider the lengths vector $ r= (4,3,4,3,4)$ as in \cite [Example
7.1]{ag} (or equivalently its projection onto $\Xi$). By Theorem \ref{volume}
one calculates that the volume of $M(r)$ is 
$$
\vol \, M(r) = -\pi^2 \Big( 6 \sum_{i=1}^n r_i^2 -2 \sum_{i \neq j} r_ir_j
\Big).
$$
By the formulas \eqref{mah} one recovers (up to rescaling by $2 \pi^2$) the
results in \cite{ag}, precisely
$$\int_{M(r)} c_i^{2} = - 6 \pi^2 \quad \forall \, i=1, \ldots, 5; $$
$$ \int_{M(r)} c_i c_j = 2 \pi^2 \quad \forall \, i \neq j.$$

\begin{remark}\label{hausknut}
In \cite{hk} Hausmann and Knutson compute the cohomology ring
$H^*(M(r), \mathbb Z)$ in terms of generators they call $R$ and $V_i$.
Denote by $\mathcal L$ the collection of $r$-long sets and define the
collection of indeces
$\mathcal L_n$ and $\mathcal S_n$ as follows:
$$
\mathcal L_n:= \big\{J\subset \{1, \ldots, n-1\} \mid J \cup \{n\} \textnormal{
is long} \big\}
$$
$$
\mathcal S_n:= \big\{J\subset \{1, \ldots, n-1\} \mid J \cup \{n\} \textnormal{
is short} \big\}.
$$ 
\begin{theorem} (Hausmann--Knutson)
For $r$ generic, the cohomology ring $H^*(M(r), \mathbb Z)$ is
$$\mathbb Z[R, V_1, \ldots, V_{n-1}] / I_{Pol} $$
where $R$ and $V_i$ are of degree $2$ and $I_{Pol}$ is generated by
the following three families:
\begin{itemize}
 \item $V_i^2 + R_iV_i$ for all $i =1, \ldots n-1$;
\item $\prod_{i \in J} V_i$ for all $J \in \mathcal L_n$;
\item $\sum_{S \subset L, S \in S_n} \big( \prod_{i \in S} V_i\big)R^{| L
\setminus S |-1}$ for all $L\subset \{1, \ldots, n-1\}$ long.
\end{itemize}

\end{theorem}

They also relate the generators $R$ and $V_i$ to the first Chern classes
$\tilde{c}_i:= c_1 (A_i)$ of circle bundles $A_i \to M(r)$ where
$$
A_i:= \{ (e_1, \ldots, e_n) \in \prod_{i=1}^n S^2_{r_i} \mid \sum_{i=1}^n e_i =0
\, \textnormal{and} \, e_i=(0,0,r_i)\}.
$$
Precisely, 
\begin{equation*}
\tilde{c}_i= \left\{\begin{array}{ll}
 R + 2V_i & \textnormal{if } i=1, \ldots, n-1\\
-R& \textnormal{if } i =n.
\end{array} \right.
\end{equation*}
Let $\omega$ be the reduced symplectic form on the polygon space $M(r)$. Then 
$$
\tilde{c}_i= \frac{\partial}{\partial r_i} [\omega].
$$
Hence, by \eqref{-}, the Chern classes $c_i$ in Theorem \ref{fine} are opposite
to the classes
$\tilde{c}_i$, i.e. $c_i = - \tilde{c}_i$. In particular this implies that the
classes
$c_1, \ldots, c_n$ also generate the cohomology of the polygon space
$M(r)$ with coefficients in $\mathbb Z[\frac{1}{2}]$, cf. \cite[Corollary 7.4,
Proposition 7.6]{hk}.
Hausmann and Knutson determine the following relations on the generators $c_i$
\begin{equation*}\label{hk123}
\begin{array}{l}
 \textnormal{(1)} \, c_i^2 = c_n^2  \textnormal{ for all } i=1, \ldots, n;\\
 \textnormal{(2)} \, \prod _{i \in L} (c_i + c_n) \textnormal{ for all } L \in
\mathcal L_n;\\
 \textnormal{(3)} \, c_n^{-1} \Big( \prod _{i \in L} (c_i - c_n) - \prod _{i \in L}
(c_i
+ c_n)\Big) \textnormal{ for all }L \subseteq \{ 1, \ldots,n-1\}, L \textnormal{
long}.\\
\end{array}
\end{equation*}
The relations (1) may also be easily obtained from Theorems \ref{fine} and
\ref{volume} since
$$
\frac{\d}{\d r_i} \epsilon_I(r)^{n-3} = \lambda_I^i (n-3) \epsilon_I(r)^{n-2}
$$
where 
\begin{equation}\label{eq:lambda_i}
\lambda_I^i = \left\{
\begin{array}{ll}
 1 & \text{if } i \in I\\
 -1 & \text{if } i \in I^c\\
\end{array} \right.
 \end{equation}
\end{remark}

\subsubsection{Some examples}
Let $\Delta^{\! 0}$ and $\Delta^{\! 1}$ be the chambers as in
Example \ref{esvolume}. If $r \in \Delta^{\! 0}$, then  $M(r) \simeq \C\P^2$
(see Proposition \ref{outer walls}) and its symplectic volume is $$ \vol \, M(r) =
2 \pi^2 (r_1 +r_2 -r_3 +r_4 +r_5)^2.$$ 
Since
$$ \frac{\d}{\d r_3} \vol \, M(r) = - \frac{\d}{\d r_i} \vol \, M(r) \quad
\forall i = 1,2,4,5$$
it follows that 
$$c_1=c_2=c_4=c_5=-c_3.$$
By Theorem \ref{fine}, the relation on the generator $c_3$ is given by
$\frac{\d^2}{\d r_3^2} \vol \, M(r)= 4 \pi^2 $, hence
$$ H^*(M(r))= \frac{\C[c_3]}{ (c_3^3)}.$$

Now consider $r \in \Delta^{\! 1},$ the polygon space $M(r)$ has symplectic
volume
$$\vol \, M(r)= 4 \pi^2 r_1 ( r_2 - r_3 + r_4 +r_5).$$
It follows that
$$ \frac{\d}{\d r_3} \vol \, M(r) = - \frac{\d}{\d r_i} \vol \, M(r) \quad
\forall i =2,4,5$$
and hence
$$c_2=c_4=c_5=-c_3.$$
The relations on the Chern classes $c_1$ and $c_3$ are given by
$$\frac{\d^2}{\d r_1^2} \vol \, M(r) = 0 \quad \text{and} \quad \frac{\d^2}{\d
r_3^2} \vol \, M(r) = 0.$$
Hence the cohomology ring of $M(r)$ is 
$$  H^*(M(r))= \frac{\C[c_1,c_3]}{(c_1^2, c_3^2)}.$$
By the wall-crossing study, cf. Section \ref{es wall-crossing}, $M(r)$ is
diffeomorphic to $\C\P^2$ blown up at a
point with exceptional divisor $M_{\{1,3\}}$. By Proposition \ref{MIP}
$$PD ([M_{\{1,3\}} (r)])= - (c_1 + c_3).$$
With respect to the basis $\{-c_1 +c_3, - (c_1 + c_3) \}$ the polygon space
$M(r)$ has intersection form 
$\left( \begin{array}{cc}
  -1 &0 \\
0 &1
 \end{array} \right).
 $

Consider now the lengths vector $ r= \frac{1}{11}(3,1,3,1,3)$. Hausmann and
Knutson \cite{hk1} have shown that $M(r)$ is isomorphic to $S^2 \times S^2$.
It is again a plain computation to obtain the volume of $M(r)$:
$$\vol \, M(r)= 8 \pi^2 r_2 r_4.$$
From this we obtain the relations 
$$ c_1= c_3 = c_5 =0$$
and by Theorem \ref{fine} the cohomology of $M(r)$ is 
$$  H^*(M(r))= \frac{\C[c_2,c_4]}{(c_2^2, c_4^2)}.$$
Moreover, with respect to the basis $\{ c_2, c_4\}$ the polygon space $M(r)$ 
has intersection form 
$\left( \begin{array}{cc}
 1 &0 \\
0 &1
 \end{array} \right).
 $
This also illustrates that $H^* (\C \P^2 \sharp \overline{\C\P^2}, \C)\approx
H^* (S^2 \times S^2, \C)$ as indeed the intersection forms 
$\left( \begin{array}{cc}
  -1 &0 \\
0 &1
 \end{array} \right)
 $
and $\left( \begin{array}{cc}
 1 &0 \\
0 &1
 \end{array} \right)
 $ 
are equivalent over $\C$ (in fact, they are equivalent over $\mathbb
Z[\frac{1}{2}]$).

The latter example is a particular case of lengths vectors of type $$r=
\frac{1}{p} (r_1, \ldots, r_{n-3}, 1,1,1)$$ with $\sum_{i=1}^{n-3} r_i < 1$ and
$p=3+ \sum_{i=1}^{n-3} r_i$. In this case the long sets $I$ are all and just the
sets that contain at least two elements of $\{n-2, n-1, n \}$. The volume of the
associated polygon space $M(r)$ is 
\begin{equation}\label{eq:111}
 \vol \, M(r) = \frac{(2 \pi)^{n-3}}{(n-3)!} 2^{n-2} \, r_1 \cdots r_{n-3}.
\end{equation}
Thus $c_{n-2}=c_{n-1}=c_n=0$ and
$$  H^*(M(r))= \frac{\C[c_1,\ldots, c_{n-3}]}{(c_1^2,\ldots, c_{n-3}^2)}.$$
To obtain \eqref{eq:111} from Theorem \ref{volume} one can first observe that
the volume of $M(r)$ can be rewritten as follows
\begin{align*}
\vol \, M(r)& = C \sum_{I \text{long}} (-1)^{n-|I|} \sum_{(k_1, \ldots, k_n) \in
K} \binom{n-3}{k_1, \ldots, k_n} (\lambda_I^1 r_1)^{k_1} \cdots (\lambda_I^n
r_n)^{k_n}\\
 & = C \sum_{(k_1, \ldots, k_n) \in K} \binom{n-3}{k_1, \ldots, k_n} r_1^{k_1}
\cdots r_n^{k_n} \sum_{I \text{long}} (-1)^{n-|I|} (\lambda_I^1)^{k_1} \cdots
(\lambda_I^n)^{k_n}\\
\end{align*}
where $C= - \frac{(2 \pi)^{n-3}}{2(n-3)!}$, $K = \{( k_1, \ldots, k_n) \in
\mathbb Z^n_+ \mid \sum_{i=1}^n k_i = n-3 \}$ and $\lambda_I^i$ is as in
\eqref{eq:lambda_i}. Let us concentrate on the second sum.

For $(k_1, \ldots, k_n) = (1, \ldots, 1, 0,0,0)$ one obtains
\begin{align}\label{eq:000}
\nonumber &\sum_{I \text{long}} (-1)^{n-|I|} \lambda_I^1 \cdots \lambda_I^n= \\
& 3 \sum_{j=0}^{n-3} \binom{n-3}{j} (-1)^{n-2-j} (-1)^{n-3+j}  +
\sum_{j=0}^{n-3} \binom{n-3}{j} (-1)^{n-3-j} (-1)^{n-3+j}= \\
\nonumber & 2 \sum_{j=0}^{n-3} \binom{n-3}{j} = -2^{n-3}
\end{align}
where the first sum in \eqref{eq:000} is relative to long sets $I$ such that 
$| I \cap \{ n-2, n-1, n \}|=2$ and the second sum to long sets $I$ such that 
$\{ n-2, n-1, n \}\subseteq I$.
By similar arguments one can prove that for any other choice of $( k_1, \ldots,
k_n) \in K$ one obtains
$$
\sum_{I \text{long}} (-1)^{n-|I|} (\lambda_I^1)^{k_1} \cdots
(\lambda_I^n)^{k_n} = 0
$$
hence proving \eqref{eq:111}.



\bibliographystyle{amsplain}

\end{document}